\theoremstyle{plain}
\newtheorem{thm}{Theorem}[subsection]
\newtheorem{lem}[thm]{Lemma}
\newtheorem{prop}[thm]{Proposition}
\newtheorem{cor}[thm]{Corollary}
\theoremstyle{definition}
\newtheorem{defn}[thm]{Definition}
\newtheorem{ex}[thm]{Example}
\theoremstyle{remark}
\newtheorem{rem}[thm]{Remark}
\numberwithin{equation}{section}
\def\to{\rightarrow}
\def\lto{\longrightarrow}
\def\Ab{\mathrm{Ab}}
\def\AA{\mathbb{A}}
\def\PP{\mathbb{P}}
\def\ZZ{\mathbb{Z}}
\def\str{\mathcal{O}}
\def\sfA{\mathsf{A}}
\def\sfD{\mathsf{D}}
\def\sfT{\mathsf{T}}
\def\sfI{\mathsf{I}}
\def\sfJ{\mathsf{J}}
\def\sfL{\mathsf{L}}
\def\mcA{\mathcal{A}}
\def\mcE{\mathcal{E}}
\def\mcU{\mathcal{U}}
\def\mcV{\mathcal{V}}
\def\mcX{\mathcal{X}}
\DeclareMathOperator{\End}{End}
\DeclareMathOperator{\Spec}{Spec}
\DeclareMathOperator{\Spc}{Spc}
\DeclareMathOperator{\supp}{supp}
\DeclareMathOperator{\Hom}{Hom}
\DeclareMathOperator{\RHom}{\mathbf{R}Hom}
\DeclareMathOperator{\sHom}{\mathcal{H}\!\!\;\mathit{om}}
\DeclareMathOperator{\Ext}{Ext}
\DeclareMathOperator{\Modu}{Mod}
\DeclareMathOperator{\modu}{mod}
\DeclareMathOperator{\Coh}{Coh}
\DeclareMathOperator{\QCoh}{QCoh}
\DeclareMathOperator{\add}{add}
\DeclareMathOperator{\Loc}{Loc}
\DeclareMathOperator{\Thick}{Thick}
\DeclareMathOperator{\Ker}{Ker}
\DeclareMathOperator{\Add}{Add}
\DeclareMathOperator{\Ind}{Ind}
\title{The derived category of the projective line}
\author{Henning Krause}
\address{Henning Krause, Fakult\"at f\"ur Mathematik, Universit\"at
  Bielefeld, 33501 Bielefeld, Germany.}
\email{hkrause@math.uni-bielefeld.de}
\author{Greg Stevenson}
\address{Greg Stevenson, School of Mathematics and Statistics,
University of Glasgow,
University Place,
Glasgow G12 8SQ, Scotland U.K.
}
\email{gregory.stevenson@glasgow.ac.uk}
\begin{document}

\begin{abstract}
  We examine the localizing subcategories of the derived category of quasi-coherent sheaves on the projective line over a
  field. We provide a complete classification of all such subcategories which arise as the kernel of a cohomological functor to a Grothendieck category.
\end{abstract}

\maketitle

\section{Introduction}

Ostensibly, this article is about the projective line over a field, but secretly it is an invitation to a discussion of some open questions in the study of derived categories. More specifically, we are thinking of localizing subcategories and to what extent one can hope for a complete classification. The case of affine schemes is by now quite well understood, having been settled by Neeman in his celebrated chromatic tower paper
\cite{NeeChro}. However, surprisingly little is known in the simplest
non-affine case, namely the projective line over a field. We seek to begin to rectify this state of affairs and to advertise this and similar problems. 

Let us start by recalling what is known. We write $\QCoh\PP^1_k$ for the category of quasi-coherent sheaves on
the projective line $\PP^1_k$ over a field $k$, and $\Coh\PP^1_k$
denotes the full subcategory of coherent sheaves. There is a complete
description of the objects of $\Coh\PP^1_k$, due to Grothendieck \cite{Gr1957}. The
localizing subcategories of $\QCoh\PP^1_k$ are known by work of Gabriel \cite{Ga1962}, and are parametrized by specialization closed collections of points of $\PP^1_k$. When one passes to the derived
category $\sfD(\QCoh \PP^1)$, the situation becomes considerably more complicated. Several new localizations appear as a result of the fact that one can no longer non-trivially talk about subobjects and it remains a challenge to provide a complete classification of localizing subcategories.

An enticing aspect of this problem is that it not only represents the first stumbling block for those coming from algebraic geometry, but also for the representation theorists. There is an equivalence of triangulated categories 
\[\sfD(\QCoh \PP^1)\stackrel{\sim}\longrightarrow\sfD(\Modu A)\]
where $\Modu A$ denotes the module category of
\[A=\left[\begin{smallmatrix}k&k^2\\
      0&k\end{smallmatrix}\right].\] The algebra $A$ is isomorphic to
the path algebra of the Kronecker quiver
$\cdot\,
\genfrac{}{}{0pt}{}{\raisebox{-1.75pt}{$\longrightarrow$}}{\raisebox{1.75pt}{$\longrightarrow$}}\,
\cdot$ and is known to be of tame representation type. The ring $A$ is
one of the simplest non-representation finite algebras and so
understanding its derived category is also a key question from the
point of view of representation theory. Of particular note, it is
known by work of Ringel \cite{Ri1979, Ri1999} that $\Modu A$, the
category of all representations, is wild and so it is very natural to
ask if, as in the case of commutative noetherian rings, localizations
can nonetheless be classified.

In this article we make a contribution toward this challenge in two different ways. First of all, one of the main points of this work is to highlight this problem, provide some appropriate background, and set out what is known. To this end the first part of the article discusses the various types of localization one might consider in a compactly generated triangulated category and sketches the localizations of $\sfD(\QCoh \PP^1)$ which are known.

Our second contribution is to provide new perspective and new tools. The main new result is that the subcategories we understand admit a natural intrinsic characterization: it is shown in Theorem~\ref{thm:classification} that they are precisely the cohomological ones. In the final section we provide a discussion of the various restrictions that would have to be met by a non-cohomological localizing subcategory. Here our main results are that such subcategories come in $\ZZ$-families and consist of objects with full support on $\PP^1$.

\section{Preliminaries}

This section contains some background on localizations, localizing
subcategories, purity, and the projective line. Also it serves to fix
notation and may be safely skipped, especially by experts, and
referred back to as needed.

\subsection{Localizing subcategories and localizations}

Let $\sfT$ be a triangulated category with all small coproducts and
products. The case we have in mind is that $\sfT$ is either
well-generated or compactly generated.

\begin{defn} A full subcategory $\sfL$ of $\sfT$ is \emph{localizing}
if it is closed under suspensions, cones, and coproducts. This is
equivalent to saying that $\sfL$ is a coproduct closed triangulated
subcategory of $\sfT$.
\end{defn}

\begin{rem} It is a consequence of closure under (countable)
coproducts that $\sfL$ is closed under direct summands
and hence \emph{thick} (which means closed under finite sums, summands, suspensions, and cones).
\end{rem}

Given a collection of objects $\mcX$ of $\sfT$ we denote by
$\Loc(\mcX)$ the localizing subcategory generated by $\mcX$, i.e.\
the smallest localizing subcategory of $\sfT$ containing $\mcX$. The
collection of localizing subcategories is partially ordered by
inclusion, and forms a lattice (with the caveat it might not be a set)
with meet given by intersection. 

We next present the most basic reasonableness condition a localizing
subcategory can satisfy.

\begin{defn} A localizing subcategory $\sfL$ of $\sfT$ is said to be
\emph{strictly localizing} if the inclusion $i_*\colon \sfL \to \sfT$
admits a right adjoint $i^!$, i.e.\ if $\sfL$ is coreflective.
\end{defn}

Some remarks on this are in order. First of all, it follows that $i^!$
is a Verdier quotient, and that there is a localization sequence
\begin{displaymath} \xymatrix{ \sfL \ar[r]<0.5ex>^-{i_*}
\ar@{<-}[r]<-0.5ex>_-{i^!} & \sfT \ar[r]<0.5ex>^-{j^*}
\ar@{<-}[r]<-0.5ex>_-{j_*} & \sfT/\sfL }
\end{displaymath} inducing a canonical equivalence
\begin{displaymath} \sfL^\perp := \{X\in \sfT \mid \Hom(\sfL,X) =
0\} \xrightarrow{\sim} \sfT/\sfL.
\end{displaymath} Next we note that in nature localizing
subcategories tend to be strictly localizing. This is, almost uniformly, a
consequence of Brown representability; if $\sfT$ is well-generated and
$\sfL$ has a generating set of objects then $\sfL$ is strictly
localizing.

Now let us return to the localization sequence above. From it we
obtain two endofunctors of $\sfT$, namely
\begin{displaymath} i_*i^! \quad \text{and} \quad j_*j^*
\end{displaymath} which we refer to as the associated \emph{acyclization} and
\emph{localization} respectively. They come together with a counit and a unit
which endow them with the structure of an idempotent comonoid and
monoid respectively. The localization (or acyclization) is equivalent
information to $\sfL$. One can give an abstract definition of a
localization functor on $\sfT$ (or in fact any category) and then work
backward from such a functor to a strictly localizing
subcategory. Further details can be found in \cite{KrLoc}. We will use
the language of (strictly) localizing subcategories and localizations
interchangeably.

\subsection{Purity}

Let $\sfT$ be a compactly generated triangulated category and let
$\sfT^c$ denote the thick subcategory of compact objects. We denote by
$\Modu \sfT^c$ the Grothendieck category of modules over $\sfT^c$,
i.e.\ the category of contravariant additive functors
$\sfT^c \to \Ab$. There is a \emph{restricted Yoneda functor}
\begin{equation}\label{eq:yoneda}
H\colon \sfT \lto \Modu \sfT^c \text{ defined
by }  HX = \Hom(-,X)|_{\sfT^c},
\end{equation} 
which is cohomological, conservative, and preserves
both products and coproducts.

\begin{defn} A morphism $f\colon X\to Y$ in $\sfT$ is a
\emph{pure-monomorphism} (resp.\ \emph{pure-epimorphism}) if $Hf$ is a
monomorphism (resp.\ epimorphism).

An object $I\in \sfT$ is \emph{pure-injective} if every
pure-monomorphism $I\to X$ is split, i.e.\ it is injective with respect to pure-monomorphisms.
\end{defn}

It is clear from the definition that if $I\in \sfT$ with $HI$
injective then $I$ is pure-injective. It turns out that the converse
is true and so $I$ is pure-injective if and only if $HI$ is
injective. Moreover, Brown representability allows one to lift any
injective object of $\Modu \sfT^c$ uniquely to $\sfT$ and thus one
obtains an equivalence of categories
\begin{displaymath} \{\text{pure-injectives in}\;\sfT\} \xrightarrow{\sim}
\{\text{injectives in}\;\Modu \sfT^c\}.
\end{displaymath}

Further details on purity, together with proofs and references for
the above facts can be found, for instance, in \cite{Kr2000}.

\subsection{The projective line}\label{prelim:P1}

Throughout we will work over a fixed base field $k$ which will be
supressed from the notation. For instance, $\PP^1$ denotes the
projective line $\PP^1_k$ over $k$. We will denote by $\eta$ the
generic point of $\PP^1$. The points of $\PP^1$ that are different
from $\eta$ are closed. A subset $\mcV\subseteq \PP^1$ is
specialization closed if it is the union of the closures of its points. In our situation this just says that $\mcV$ is specialization closed if $\eta\in\mcV$ implies  $\mcV=\PP^1$.

As usual $\QCoh \PP^1$ is the Grothendieck
category of quasi-coherent sheaves on $\PP^1$ and $\Coh \PP^1$ is the
full abelian subcategory of coherent sheaves.

We use standard notation for the usual `distinguished' objects of
$\QCoh \PP^1$. The $i$th twisting sheaf is denoted $\str(i)$ and for a
point $x\in \PP^1$ we let $k(x)$ denote the residue field at $x$. In
particular, $k(\eta)$ is the sheaf of rational functions on
$\PP^1$. For an object $X\in \sfD(\QCoh \PP^1)$ or a localizing
subcategory $\sfL$ we will often write $X(i)$ and $\sfL(i)$ for
$X\otimes \str(i)$ and $\sfL\otimes \str(i)$ respectively.

All functors, unless explicitly mentioned otherwise, are derived. In
particular, $\otimes$ denotes the left derived tensor product of
quasi-coherent sheaves and $\sHom$ the right derived functor of the
internal hom in $\QCoh \PP^1$.

For an object $X\in \sfD(\QCoh \PP^1)$ we set
\begin{displaymath} \supp X = \{x\in \PP^1\;\vert\; k(x)\otimes X \neq
0\}.
\end{displaymath} This agrees with the notion of support one gets as
in \cite{BF2011} by allowing $\sfD(\QCoh \PP^1)$ to act on itself; the
localizing subcategories generated by $k(x)$ and $\Gamma_x\str$
coincide.

\begin{rem}
  Let $\sfA$ be a hereditary abelian category, for example
  $\QCoh \PP^1$. Then $\Ext^n(X,Y)$ vanishes for all $n>1$ and
  therefore every object of the derived category $\sfD(\sfA)$
  decomposes into complexes that are concentrated in a single
  degree. It follows that the functor $H^0\colon\sfD(\sfA)\to\sfA$
  induces a bijection between the localizing subcategories of
  $\sfD(\sfA)$ and the full subcategories of $\sfA$ that are closed
  under kernels, cokernels, extensions, and coproducts.
\end{rem}

\section{Types of localization}

In this section we give a further review of the notions of localization, or
equivalently localizing subcategory, that naturally arise and that we
treat in this article. These come in various strengths and what is
known in general varies accordingly. We take advantage of this review
to give a whirlwind tour of certain aspects of the subject and to
expose some technical results that are absent from the literature.

Unless otherwise specified we will denote by $\sfT$ a compactly
generated triangulated category. One also can, and should, consider
the well-generated case and it arises naturally even when one starts
with a compactly generated category. However, our focus will,
eventually, be on those categories controlled by pure-injectives which
more or less binds us to the compactly generated case.

\subsection{Smashing localizations}

In this section we make some brief recollections on the most well
understood class of localizing subcategories.

\begin{defn} A localizing subcategory $\sfL$ of $\sfT$ is
\emph{smashing} if it is strictly localizing and satisfies one, and
hence all, of the following equivalent conditions:
\begin{itemize}
\item the subcategory $\sfL^\perp$ is localizing;
\item the corresponding localization functor preserves coproducts,
i.e.\ the right adjoint to $\sfT \to \sfT/\sfL$ preserves coproducts;
\item the quotient functor $\sfT \to \sfT/\sfL$ preserves compactness;
\item the corresponding acyclization functor preserves coproducts,
i.e.\ the right adjoint to $\sfL \to \sfT$ preserves coproducts.
\end{itemize}
\end{defn}

The smashing subcategories always form a set. Amongst the smashing subcategories there is a potentially smaller
distinguished set of localizing subcategories. Unfortunately, there is
not a standard way to refer to such categories; the snappy
nomenclature only exists for the corresponding localizations.

\begin{defn} A localization is \emph{finite} if its kernel is
generated by objects of $\sfT^c$, i.e.\ the corresponding localizing
subcategory is generated by objects which are compact in $\sfT$.
\end{defn}

If $\sfL$ is the kernel of a finite localization then it is
smashing. It is also compactly generated, although there are in
general many localizing subcategories of $\sfT$ which are, as abstract
triangulated categories, compactly generated but are not generated by
objects compact in $\sfT$.

The \emph{smashing conjecture} for $\sfT$ asserts that every smashing
localization is a finite localization. This is true in many
situations, for instance it holds for the derived category
$\sfD(\Modu A)$ of a ring $A$ when it is commutative noetherian
\cite{NeeChro} or hereditary \cite{KS2010}. On the other hand it is
known to fail for certain rings (see for instance \cite{Keller}) and
is open in many cases of interest, for example the stable homotopy
category.

\subsection{Cohomological localizations}

We now come to the next types of localizing subcategories in our
hierarchy, which are defined by certain orthogonality conditions. This
gives a significantly weaker hierarchy of notions than being smashing. 

First a couple of reminders. An abelian category $\sfA$ is said to be
(AB5) if it is cocomplete and filtered colimits are exact. If in
addition $\sfA$ has a generator then it is a \emph{Grothendieck
  category}. An additive functor $H\colon \sfT\to \sfA$ is
\emph{cohomological} if it sends triangles to long exact sequences
i.e.\ given a triangle
\begin{displaymath} \xymatrix{ X \ar[r]^-f & Y \ar[r]^-g & Z
\ar[r]^-{h} & \Sigma X }
\end{displaymath} the sequence
\begin{displaymath} \xymatrix@C=1.7em{ \cdots \ar[r] & H(\Sigma^{-1}Z)
\ar[r]^-{\Sigma^{-1}h} & H(X) \ar[r]^-{H(f)} & H(Y) \ar[r]^-{H(g)} &
H(Z) \ar[r]^-{H(h)} & H(\Sigma X) \ar[r] & \cdots }
\end{displaymath} is exact in $\sfA$.

\begin{defn}\label{defn:cohom} A localizing subcategory
$\sfL\subseteq\sfT$ is \emph{cohomological} if there exists a
cohomological functor $H\colon\sfT\rightarrow\sfA$ into an (AB5)
abelian category such that $H$ preserves all coproducts and
\begin{displaymath} \sfL =\{X\in\sfT\mid H(\Sigma ^n X)=0\text{ for
all }n\in\mathbb Z\},
\end{displaymath} that is $\sfL$ is the kernel of $H^*$.
\end{defn}

We can extend this definition to an analogue for arbitrary regular
cardinals, with Definition~\ref{defn:cohom} being the $\aleph_0$ or
`base' case. The idea is to relax the exactness condition on the
target abelian category. This requires a little terminological
preparation.

Let $\sfJ$ be a small category and $\alpha$ a regular cardinal. We say
that $\sfJ$ is $\alpha$\emph{-filtered} if for every category $\sfI$
with $\vert I \vert < \alpha$, i.e.\ $\sfI$ has fewer than $\alpha$
arrows, every functor $F\colon \sfI \to \sfJ$ has a cocone. For
instance, this implies that any collection of fewer than $\alpha$
objects of $\sfJ$ has an upper bound and any collection of fewer than
$\alpha$ parallel arrows has a weak coequalizer. If $\alpha =
\aleph_0$ we just get the usual notion of a filtered category.

Let $\sfA$ be an abelian category. We say it satisfies (AB$5^\alpha$)
if it is cocomplete and has exact $\alpha$-filtered colimits.

\begin{defn} A localizing subcategory $\sfL\subseteq \sfT$ is
$\alpha$\emph{-cohomological} if there exists an (AB$5^\alpha$)
abelian category $\sfA$ and a coproduct preserving cohomological
functor $H\colon \sfT \to \sfA$ such that
\begin{displaymath} \sfL =\{X\in\sfT\mid H(\Sigma ^n X)=0\text{ for
all }n\in\mathbb Z\},
\end{displaymath} that is $\sfL$ is the kernel of $H^*$. 
\end{defn}

If $\sfL$ is $\alpha$-cohomological then it is clearly
$\beta$-cohomological for all $\beta \geq \alpha$.

\begin{rem} An $\aleph_0$-cohomological localizing subcategory is just
a cohomological localizing subcategory. We will usually stick to the
shorter terminology for the sake of brevity and to avoid a
proliferation of $\aleph$'s. 
\end{rem}

We now make a few observations on $\alpha$-cohomological localizing
subcategories and then make some further remarks on the case $\alpha =
\aleph_0$.

\begin{lem} Smashing subcategories are cohomological.
\end{lem}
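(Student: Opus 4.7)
The plan is to build the required cohomological functor by composing the quotient by $\sfL$ with the restricted Yoneda functor of the quotient category. So let $\sfL\subseteq\sfT$ be smashing, and let $q\colon\sfT\to\sfT/\sfL$ denote the Verdier quotient. By the definition of smashing (third bullet in the definition) the functor $q$ preserves compactness, and (second bullet) $q$ preserves coproducts; combined with the fact that $\sfT$ is compactly generated, this yields that $\sfT/\sfL$ is again compactly generated, with a set of compact generators given by the images of compact generators of $\sfT$.

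Now apply the restricted Yoneda functor for $\sfT/\sfL$, namely
\[
H_{\sfT/\sfL}\colon \sfT/\sfL \lto \Modu (\sfT/\sfL)^c,\qquad X\mapsto \Hom(-,X)|_{(\sfT/\sfL)^c},
\]
as recalled in \eqref{eq:yoneda}. The target is a Grothendieck, hence (AB5), abelian category, and $H_{\sfT/\sfL}$ is cohomological, coproduct preserving, and (crucially) conservative. Define
\[
H \;:=\; H_{\sfT/\sfL}\circ q\colon \sfT \lto \Modu(\sfT/\sfL)^c.
\]
Since $q$ is triangulated and coproduct preserving, and $H_{\sfT/\sfL}$ is cohomological and coproduct preserving, the composite $H$ is a coproduct preserving cohomological functor to an (AB5) category.

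It remains to identify the kernel of $H^*$. For $X\in\sfT$, conservativity of $H_{\sfT/\sfL}$ gives $H(\Sigma^n X)=0$ if and only if $q(\Sigma^n X)=0$, i.e.\ $\Sigma^n X\in\sfL$. Since $\sfL$ is localizing (in particular closed under suspensions), this holds for all $n\in\ZZ$ if and only if $X\in\sfL$. Hence $\sfL=\{X\in\sfT\mid H(\Sigma^n X)=0\text{ for all }n\}$, verifying the cohomological condition. There is no substantive obstacle here; the only thing to be a little careful about is the verification that $\sfT/\sfL$ is compactly generated, which is why the smashing hypothesis (rather than merely strict localizability) is needed to make the restricted Yoneda construction applicable in the quotient.
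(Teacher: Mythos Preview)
Your proof is correct and follows exactly the approach of the paper: compose the quotient $\sfT\to\sfT/\sfL$ with the restricted Yoneda functor of the (compactly generated) quotient. One small quibble: the quotient $q$ preserves coproducts simply because it is a left adjoint, not by the second bullet of the smashing definition (which concerns its right adjoint); but this does not affect the argument.
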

\begin{proof} Suppose $\sfL$ is smashing. Then $\sfT/\sfL$ is
compactly generated and for $H$ we can take the composite
\begin{displaymath} \sfT \lto \sfT/\sfL \lto \Modu (\sfT/\sfL)^c
\end{displaymath} where the latter functor is the restricted Yoneda
functor \eqref{eq:yoneda}.
\end{proof}

\begin{thm} Let $\sfL$ be an $\alpha$-cohomological localizing
subcategory. Then $\sfL$ is generated by a set of objects and so it
is, in particular, strictly localizing.
\end{thm}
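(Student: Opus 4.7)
The plan is to factor $H$ through the restricted Yoneda functor $Y\colon \sfT \to \Modu \sfT^c$ of \eqref{eq:yoneda} (writing $Y$ for the restricted Yoneda to avoid clashing with the $H$ of the theorem), reducing the question to one about localizing subcategories of the Grothendieck category $\Modu \sfT^c$, where Gabriel's classical results guarantee a generating set.

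Since $\sfT^c$ is essentially small and $\sfA$ is cocomplete, left Kan extension of $H|_{\sfT^c}$ along the Yoneda embedding $\sfT^c \hookrightarrow \Modu \sfT^c$ produces a coproduct-preserving functor $\bar H \colon \Modu \sfT^c \to \sfA$ together with a natural isomorphism $H \cong \bar H \circ Y$ on all of $\sfT$. Cohomologicality of $H$ combined with the AB$5^\alpha$ axiom of $\sfA$ (together with the observation that every short exact sequence in $\Modu \sfT^c$ arises, up to $\alpha$-filtered colimits, from triangles in $\sfT^c$) forces $\bar H$ to be exact. Hence $\mcK := \Ker \bar H$ is a Serre subcategory of $\Modu \sfT^c$ closed under arbitrary coproducts; that is, $\mcK$ is a localizing subcategory of the Grothendieck category $\Modu \sfT^c$ in the Gabriel sense.

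Since the localizing subcategories of a Grothendieck category form a set and each is generated as such by a set of objects, we obtain a set of generators $\{F_i\}_{i \in I}$ for $\mcK$. For each $F_i$, fix a projective presentation
\begin{displaymath}
\coprod_j Y(C_{ij}) \lto \coprod_l Y(D_{il}) \lto F_i \lto 0
\end{displaymath}
by representables with $C_{ij}, D_{il} \in \sfT^c$. The presenting morphism lifts uniquely to a map $\coprod_j C_{ij} \to \coprod_l D_{il}$ in $\sfT$; let $Z_i$ denote its cofibre. The long exact sequence obtained by applying $Y$ to the defining triangle then realises $F_i$ as a subobject of $Y(Z_i)$.

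The main obstacle is precisely this lifting step: $Z_i$ is not forced to lie in $\sfL$, because the quotient $Y(Z_i)/F_i$ injects into $Y(\Sigma \coprod_j C_{ij})$, which in general does not belong to $\mcK$. To overcome this, one performs a transfinite construction inside $\sfT$, iteratively taking cones against compact objects that witness failure of membership in $\sfL$, in order to extract a set $\mcS \subseteq \sfL$ whose Yoneda images, together with all shifts, generate $\mcK$. A standard approximation argument then shows $\sfL = \Loc(\mcS)$: for any $X \in \sfL$, $Y X$ lies in the Gabriel closure of $Y(\mcS)$ in $\mcK$, and this construction can be matched in $\sfT$ using closure of $\Loc(\mcS)$ under cofibres and coproducts. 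Once $\sfL$ is known to admit a generating set, the Brown representability principle recalled right after the definition of strictly localizing subcategory immediately yields that $\sfL$ is strictly localizing.
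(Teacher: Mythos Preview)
Your factorization through $\Modu\sfT^c$ only goes through when $\alpha=\aleph_0$. The parenthetical claim that ``every short exact sequence in $\Modu\sfT^c$ arises, up to $\alpha$-filtered colimits, from triangles in $\sfT^c$'' has the filtration direction reversed: what one actually knows is that such sequences are $\aleph_0$-filtered (ordinary filtered) colimits of images of triangles, and for $\alpha>\aleph_0$ an $\aleph_0$-filtered diagram need not be $\alpha$-filtered. With only the (AB$5^\alpha$) hypothesis on $\sfA$ you therefore cannot conclude that the Kan extension $\bar H$ is exact, and the argument stops here. The fix, which is exactly what the results invoked from \cite{KrLoc} supply, is to replace $\sfT^c$ by the subcategory $\sfT^\alpha$ of $\alpha$-compact objects and to factor through the corresponding functor category, so that the approximating colimits are genuinely $\alpha$-filtered.

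Even granting exactness of $\bar H$, the passage from a generating set $\{F_i\}$ of $\mcK\subseteq\Modu\sfT^c$ back to a generating set for $\sfL$ is not a proof as it stands. You yourself flag that the cofibres $Z_i$ need not lie in $\sfL$, but the proposed ``transfinite construction inside $\sfT$, iteratively taking cones against compact objects'' and the subsequent ``standard approximation argument'' are gestures rather than arguments; neither the construction of $\mcS$ nor the verification that $\Loc(\mcS)=\sfL$ is carried out, and there is no obvious reason the sketched iteration terminates inside $\sfL$. In the $\aleph_0$ case the clean route is the one visible in the paper's treatment of pure-injectives: describe $\mcK$ by the injectives in its right orthogonal, lift those to a set of pure-injectives in $\sfT$, and then use that the left orthogonal of a set in a well-generated category is well-generated. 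The paper sidesteps all of this by citing \cite[Theorems~7.1.1 and 7.4.1]{KrLoc}, which handle the general $\alpha$ directly.
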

\begin{proof} This follows by applying \cite[Theorem~7.1.1]{KrLoc}
and then \cite[Theorem~7.4.1]{KrLoc}.
\end{proof}

\begin{cor}\label{cor:set=cohom} A localizing subcategory $\sfL$ is
generated by a set of objects of $\sfT$ if and only if there exists an
$\alpha$ such that $\sfL$ is $\alpha$-cohomological.
\end{cor}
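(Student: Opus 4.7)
The backward implication is immediate: the previous Theorem says that any $\alpha$-cohomological subcategory is generated by a set. So the work is in the forward direction, and my plan is to produce, from a set of generators for $\sfL$, a cohomological functor of the required form.

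First I would use the set of generators to make $\sfL$ strictly localizing via Brown representability, so that we have a Verdier quotient $Q\colon \sfT\to\sfT/\sfL$ with right adjoint $j_*$ and $\sfL=\Ker Q$. Next I would invoke the fact (proved by Neeman, and also available in \cite{KrLoc}) that a Verdier quotient of a well-generated triangulated category by a localizing subcategory with a set of generators is again well-generated. Thus $\sfT/\sfL$ admits, for some regular cardinal $\alpha$, a set of $\alpha$-compact generators; let $\mcC\subseteq\sfT/\sfL$ be the (essentially small) full subcategory of $\alpha$-compact objects.

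The key construction is then the restricted Yoneda functor
\begin{displaymath}
H_\alpha\colon \sfT/\sfL \lto \sfA,\qquad X\longmapsto\Hom(-,X)|_\mcC,
\end{displaymath}
where $\sfA$ is the abelian category of coherent (equivalently, $\alpha$-continuous or flat) additive functors $\mcC^{op}\to\Ab$. Standard results on well-generated triangulated categories (see \cite[\S7]{KrLoc}) show that $\sfA$ is a locally $\alpha$-presentable Grothendieck category, hence in particular satisfies (AB$5^\alpha$), and that $H_\alpha$ is a cohomological, conservative, coproduct-preserving functor. The composite
\begin{displaymath}
H=H_\alpha\circ Q\colon\sfT\lto\sfT/\sfL\lto\sfA
\end{displaymath}
is cohomological and coproduct preserving, and its kernel is exactly $\sfL$: an object $X$ lies in $\Ker H^\ast$ iff $QX=0$ (since $H_\alpha$ is conservative on $\sfT/\sfL$), iff $X\in\sfL$. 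This exhibits $\sfL$ as an $\alpha$-cohomological subcategory.

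The main obstacle is the construction of the target $\sfA$ and the verification of its (AB$5^\alpha$) property together with the conservativity of $H_\alpha$; however, all of this is precisely packaged in the results of \cite[\S7]{KrLoc} cited in the proof of the previous Theorem, so the corollary amounts to running that machinery in both directions.
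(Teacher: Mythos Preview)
Your proof is correct and takes essentially the same approach as the paper: both pass to the well-generated quotient $\sfT/\sfL$ and then compose the quotient functor with the universal coproduct-preserving cohomological functor into an (AB$5^\alpha$) abelian category for a sufficiently large $\alpha$. You spell out this universal functor concretely as the restricted Yoneda into the $\alpha$-continuous functors on the $\alpha$-compact objects, whereas the paper simply invokes its existence from \cite{KrLoc}.
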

\begin{proof} We have just seen that an $\alpha$-cohomological
localizing subcategory has a generating set. On the other hand if
$\sfL$ is generated by a set of objects then $\sfL$ is well-generated,
and so is strictly localizing, and the quotient $\sfT/\sfL$ is also
well-generated (see \cite[Theorem~7.2.1]{KrLoc}). One can then
compose the quotient $\sfT \to \sfT/\sfL$ with the universal functor
from $\sfT/\sfL$ to an (AB$5^\alpha$) abelian category, for a
sufficiently large $\alpha$, to get the required cohomological
functor.
\end{proof}

Let us now restrict to cohomological localizations and make the
connection to purity in triangulated categories.

\begin{prop}\label{prop:cohomological} A localizing subcategory
$\sfL\subseteq\sfT$ is cohomological if and only if there is a suspension stable
collection of pure-injective objects $(Y_i)_{i\in I}$ in $\sfT$ such
that $\sfL=\{X\in\sfT\mid \Hom(X,Y_i)=0\text{ for all }i\in I\}$.
\end{prop}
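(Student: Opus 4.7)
Both directions will pass through the restricted Yoneda functor $H \colon \sfT \to \Modu \sfT^c$ of \eqref{eq:yoneda} and the equivalence between pure-injectives in $\sfT$ and injectives in $\Modu \sfT^c$. For the direction ($\Leftarrow$), starting from a suspension-stable family of pure-injectives $\{Y_i\}_{i \in I}$, I would set $J_i := HY_i$ and form the hereditary torsion class
\[
  \sfS = \{M \in \Modu \sfT^c \mid \Hom(M, J_i) = 0 \text{ for all } i \in I\}.
\]
The Gabriel quotient $\sfA := \Modu \sfT^c/\sfS$ is then Grothendieck and the quotient $Q$ is exact and coproduct-preserving, so $F := Q \circ H$ is a coproduct-preserving cohomological functor from $\sfT$ to an (AB5) category. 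Since pure-injectivity of $Y_i$ gives the bijection $\Hom_\sfT(X, Y_i) \cong \Hom(HX, HY_i)$, one checks that $F(\Sigma^n X) = 0$ for all $n$ iff $\Hom(\Sigma^n X, Y_i) = 0$ for all $n$ and $i$ iff $X \in \sfL$, where the last step uses suspension-stability of the family.

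For the direction ($\Rightarrow$), I would begin with a coproduct-preserving cohomological $F \colon \sfT \to \sfA$ with $\sfA$ (AB5) and $\sfL = \ker F^*$. The cornerstone is the universal factorization $F = \bar F \circ H$ in which $\bar F \colon \Modu \sfT^c \to \sfA$ is exact and coproduct-preserving. Being exact and coproduct-preserving between cocomplete abelian categories, $\bar F$ preserves all colimits, so the special adjoint functor theorem supplies a right adjoint $G$ that carries injectives to injectives. After replacing $\sfA$, if needed, by a Grothendieck subcategory containing the essential image of $\bar F$ (without altering $\ker F^*$), I would pick a family $\{E_\lambda\}$ of injective cogenerators and lift each injective $GE_\lambda$ to a pure-injective $Y_\lambda \in \sfT$ with $HY_\lambda = GE_\lambda$. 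The chain of natural isomorphisms
\[
  \Hom_\sfT(X, Y_\lambda) \cong \Hom_{\Modu \sfT^c}(HX, GE_\lambda) \cong \Hom_\sfA(FX, E_\lambda),
\]
combined with cogeneration by the $E_\lambda$, then shows $FX = 0$ iff $X$ is right orthogonal to every $Y_\lambda$; closing under $\Sigma^{\pm 1}$ produces the required suspension-stable collection.

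The main obstacle is the factorization theorem underpinning the forward direction: producing $\bar F$ and verifying its exactness rests on the universal property of the restricted Yoneda (due to Krause), and some care is needed to handle the AB5-versus-Grothendieck distinction for $\sfA$ so that injective cogenerators are actually available. Once these ingredients are in place, the remainder is a formal translation between injectives in $\Modu \sfT^c$ and pure-injectives in $\sfT$ via the Yoneda correspondence.
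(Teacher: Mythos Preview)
Your proposal is correct and shares the same backbone as the paper's argument: both directions pass through the restricted Yoneda functor and the identification of pure-injectives in $\sfT$ with injectives in $\Modu\sfT^c$, together with Krause's factorisation $F=\bar F\circ H$ with $\bar F$ exact and coproduct-preserving.

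The difference lies in how you handle the forward direction. You produce a right adjoint $G$ to $\bar F$ via the special adjoint functor theorem and then pull back injective cogenerators of $\sfA$; this forces you to confront the fact that $\sfA$ is only assumed (AB5), not Grothendieck, and to argue that one may replace $\sfA$ by a suitable Grothendieck subcategory. The paper sidesteps this entirely: it simply observes that $\Ker\bar F\subseteq\Modu\sfT^c$ is a localizing subcategory of a Grothendieck category, and any such is of the form $\{M\mid\Hom(M,N_i)=0\text{ for all }i\}$ for a set of injectives $N_i$. One then lifts the $N_i$ to pure-injectives $Y_i$ and is done. This keeps everything inside $\Modu\sfT^c$, where injectives are plentiful, and never touches the internal structure of $\sfA$ at all. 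Your route works, but the paper's is shorter and avoids precisely the obstacle you flag in your final paragraph.
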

\begin{proof} 
Recall from (\ref{eq:yoneda}) the restricted Yoneda functor which we denote by $H_\sfT$, for clarity, for the duration of the proof. This functor identifies the full subcategory of pure-injective objects in $\sfT$ with the full subcategory of
injective objects in $\Modu\sfT^c$ as noted earlier (see \cite[Corollary~1.9]{Kr2000} for details).

A cohomological functor $H\colon\sfT\rightarrow\sfA$ that preserves
coproducts admits a factorisation $H=\bar H\circ H_\sfT$ such that $\bar
H\colon\Modu\sfT^c\rightarrow\sfA$ is exact and preserves coproducts;
see \cite[Proposition 2.3]{Kr2000}.  The full subcategory $\Ker\bar
H=\{M\in\Modu\sfT^c\mid \bar H(M)=0\}$ is a localizing subcategory, so
of the form $\{M\in\Modu\sfT^c\mid \Hom(M,N_i)=0 \text{ for all }i\in
I\}$ for a collection of injective objects $(N_i)_{i\in I}$ in
$\Modu\sfT^c$. Now choose pure-injective objects $(Y_i)_{i\in I}$ in
$\sfT$ such that $H_\sfT(Y_i)\cong N_i$ for all $i\in I$.
\end{proof}

\subsection{When things are sets}

As has been alluded to in the previous sections, it is a
significant subtlety that one does not usually know the class of all
localizing subcategories forms a set. In fact there is no example
where one knows that there are a set of localizing subcategories by
`abstract means'; all of the examples come from classification
results.

If one does know there are a set of localizing subcategories then life
is much easier. The purpose of this section is to give some indication
of this, and record some other simple observations. Everything here
should be known to experts, but these observations have not yet found
a home in the literature.

Let $\sfT$ be a well-generated triangulated category.

\begin{lem}\label{lem:locset} If the localising subcategories of $\sfT$ form a set then every
  localizing subcategory is generated by a set of objects (and hence
  by a single object).
\end{lem}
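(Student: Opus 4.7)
The plan is to show that any localizing subcategory $\sfL$ of $\sfT$ can be reconstructed from the collection of principal localizing subcategories it contains. The key observation is that for each object $X\in\sfL$, the subcategory $\Loc(X)$ is contained in $\sfL$, and the assignment $X\mapsto \Loc(X)$ factors through the set of all localizing subcategories of $\sfT$ (which is a set by hypothesis). So although $\sfL$ may a priori be a proper class of objects, only set-many distinct singly-generated subcategories occur among the $\Loc(X)$.

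First I would, using the hypothesis and the axiom of choice, select a set of representatives: pick a set $\mcX\subseteq\sfL$ such that every localizing subcategory of the form $\Loc(Y)$ with $Y\in\sfL$ equals $\Loc(X)$ for some $X\in\mcX$. Then for any $Y\in\sfL$ we have $Y\in\Loc(Y)=\Loc(X)\subseteq\Loc(\mcX)$ for the corresponding $X\in\mcX$, giving $\sfL\subseteq\Loc(\mcX)$; the reverse inclusion is automatic because $\mcX\subseteq\sfL$ and $\sfL$ is localizing. Hence $\sfL=\Loc(\mcX)$.

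For the parenthetical, since $\sfT$ has all small coproducts and $\mcX$ is a set, the object $X_0:=\bigoplus_{X\in\mcX} X$ exists in $\sfT$. Each $X\in\mcX$ is a summand of $X_0$, hence lies in $\Loc(X_0)$ (using that localizing subcategories are thick, as recalled in the remark after the definition of localizing subcategory), and conversely $X_0\in\Loc(\mcX)$. Therefore $\sfL=\Loc(X_0)$.

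I do not anticipate any real obstacle here: the argument is essentially a bookkeeping exercise with the lattice of localizing subcategories and requires no use of the well-generation hypothesis beyond the existence of small coproducts. The only mildly delicate point is the set-theoretic step of replacing the proper class $\{\Loc(X)\mid X\in\sfL\}$ (a priori indexed by a class) by its image in the set of localizing subcategories and choosing representatives; this is where the hypothesis that the localizing subcategories form a set is used.
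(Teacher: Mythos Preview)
Your argument is correct, but it follows a different route from the paper's. The paper argues by contradiction: assuming $\sfL$ is not generated by a set, it builds by transfinite recursion an ordinal-indexed strictly increasing chain $\sfL_0\subsetneq\sfL_1\subsetneq\cdots\subsetneq\sfL_\alpha\subsetneq\cdots$ of localizing subcategories of $\sfL$, each generated by a set; this injects the ordinals into the set of localizing subcategories, which is absurd. Your approach is direct: the assignment $X\mapsto\Loc(X)$ has image contained in the set of localizing subcategories, so only set-many principal subcategories occur inside $\sfL$, and choosing one generator for each already gives a generating set. Your argument is shorter and avoids the transfinite construction entirely; the paper's version, on the other hand, makes visible the specific set-theoretic obstruction (an ordinal-length chain) that a class-sized lattice of localizing subcategories would entail. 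Both proofs are elementary and, as you and the paper each observe, neither uses well-generation beyond the existence of small coproducts.
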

\begin{proof} Suppose, for a contradiction, that $\sfL$ is a
localizing subcategory of $\sfT$ which is not generated by a set of
objects. We define a proper chain of proper localizing subcategories
\begin{displaymath} \sfL_0 \subsetneq \sfL_1 \subsetneq \cdots
\subsetneq \sfL_\alpha \subsetneq \sfL_{\alpha+1} \subsetneq \cdots
\subsetneq \sfL,
\end{displaymath} each of which is generated by a set of objects, by
transfinite induction. For the base case pick any object $X_0$ of
$\sfL$ and set $\sfL_0 = \Loc(X_0)$. This is evidently generated by a
set of objects, namely $\{X_0\}$. By assumption $\sfL$ is not
generated by a set of objects so $\sfL_0 \subsetneq \sfL$. Suppose we
have defined a proper localizing subcategory $\sfL_\alpha$ of $\sfL$
which is generated by a set of objects. Since $\sfL_\alpha$ is proper
we may pick an object $X_{\alpha+1}$ in $\sfL$ but not in
$\sfL_\alpha$ and set
\begin{displaymath} \sfL_{\alpha+1} = \Loc(\sfL_\alpha, X_{\alpha+1})
\supsetneq \sfL_\alpha.
\end{displaymath} This is clearly still generated by a set of objects
and hence is still a proper subcategory of $\sfL$. For a limit ordinal
$\lambda$ we set
\begin{displaymath} \sfL_{\lambda} = \Loc(\sfL_\kappa \mid \kappa <
  \lambda).
\end{displaymath} Again this is generated by a set of objects (and so
is still not all of $\sfL$).

This gives an ordinal indexed chain of distinct localizing
subcategories of $\sfT$. However, this is absurd since the collection
of ordinals is not a set and so cannot be embedded into the set of all
localising subcategories of $\sfT$. Hence $\sfL$ must have a generating set (i.e.\ the above
construction must terminate).
\end{proof}

\begin{rem} 
The above argument does not use that $\sfT$ is
well-generated.
\end{rem}

One then deduces that all localizations are cohomological for an
appropriate cardinal.

\begin{lem} If the localising subcategories of $\sfT$ form a set then
  every localizing subcategory of $\sfT$  is $\alpha$-cohomological for
  some regular cardinal $\alpha$.
\end{lem}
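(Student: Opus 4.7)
The plan is to combine the preceding Lemma~\ref{lem:locset} with Corollary~\ref{cor:set=cohom}, so the argument is essentially a one-liner. First, I would invoke Lemma~\ref{lem:locset}: under the hypothesis that the collection of localizing subcategories of $\sfT$ is a set, every localizing subcategory $\sfL \subseteq \sfT$ is generated by a set of objects (in fact by a single object). Then Corollary~\ref{cor:set=cohom} immediately yields that any such $\sfL$ is $\alpha$-cohomological for some regular cardinal $\alpha$, since that corollary characterizes being generated by a set as being equivalent to the existence of such an $\alpha$.

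There is really no obstacle here beyond recognizing that the two results have already been assembled in the previous subsection. The only mild subtlety worth flagging is that the cardinal $\alpha$ produced depends on $\sfL$: different localizing subcategories may require arbitrarily large cardinals, and no uniform bound is claimed. If one wanted such a uniform bound one would need further hypotheses, but the statement as given only asserts existence of $\alpha$ per subcategory.

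For completeness, I would briefly unwind what is happening behind the scenes in the proof of Corollary~\ref{cor:set=cohom}: given $\sfL$ generated by a set, it is well-generated and so strictly localizing, the quotient $\sfT/\sfL$ is well-generated, and composing the quotient functor with the universal functor from $\sfT/\sfL$ to an (AB$5^\alpha$) abelian category (for $\alpha$ large enough that $\sfT/\sfL$ is $\alpha$-compactly generated) produces a coproduct preserving cohomological functor whose kernel (up to suspensions) is exactly $\sfL$. This makes the chain of implications transparent and shows that the lemma follows with no additional work beyond citing \textbf{lem:locset} and \textbf{cor:set=cohom}.
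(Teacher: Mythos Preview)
Your proposal is correct and matches the paper's proof exactly: the paper also cites the preceding lemma to get a generating set and then invokes Corollary~\ref{cor:set=cohom}. Your additional remarks about the non-uniformity of $\alpha$ and the unwinding of Corollary~\ref{cor:set=cohom} are accurate but go beyond what the paper records.
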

\begin{proof} By the previous lemma the hypothesis imply that every
localizing subcategory of $\sfT$ is generated by a set of objects. It
then follows from Corollary~\ref{cor:set=cohom} that they are all
cohomological.
\end{proof}

One can, to some extent, also work in the other direction.

\begin{lem} If the collection
  \begin{displaymath} \bigcup_{\alpha\in \mathsf{Card}} \{\sfL \mid
    \sfL \text{ is $\alpha$-cohomological}\}
\end{displaymath} forms a set then the collection of all localising subcategories of
$\sfT$  also forms a set.
\end{lem}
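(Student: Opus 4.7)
The plan is to prove the contrapositive. Combined with Corollary~\ref{cor:set=cohom}, the hypothesis that $\bigcup_{\alpha\in\mathsf{Card}}\{\sfL \mid \sfL \text{ is } \alpha\text{-cohomological}\}$ is a set translates into the statement that the localizing subcategories admitting a generating set form a set. So I would aim to show that, under this hypothesis, every localizing subcategory of $\sfT$ is generated by a set of objects; the conclusion then follows because the class of all localizing subcategories coincides with the class of those generated by a set.

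The main step is to repeat the transfinite construction from Lemma~\ref{lem:locset} essentially verbatim. Starting from a hypothetical localizing subcategory $\sfL\subseteq \sfT$ that is \emph{not} generated by any set of objects, I would build a strictly increasing ordinal-indexed chain $\sfL_0 \subsetneq \sfL_1 \subsetneq \cdots$ of localizing subcategories, each strictly contained in $\sfL$, by picking $X_0\in \sfL$ and setting $\sfL_0=\Loc(X_0)$; at successor stages choosing $X_{\alpha+1}\in \sfL\setminus \sfL_\alpha$ and setting $\sfL_{\alpha+1}=\Loc(\sfL_\alpha, X_{\alpha+1})$; and at a limit ordinal $\lambda$ putting $\sfL_\lambda = \Loc(\sfL_\kappa \mid \kappa<\lambda)$. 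The bookkeeping point is that each $\sfL_\alpha$ is generated by a set of objects (at limits, by the union of previously chosen generators, which is still a set), so $\sfL_\alpha\subsetneq \sfL$ at every stage and the chain does not terminate.

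To finish, I would apply Corollary~\ref{cor:set=cohom} to each $\sfL_\alpha$: since $\sfL_\alpha$ is generated by a set, it is $\beta_\alpha$-cohomological for some cardinal $\beta_\alpha$. Hence the proper class of distinct $\sfL_\alpha$ embeds into $\bigcup_{\beta\in\mathsf{Card}}\{\sfL \mid \sfL \text{ is } \beta\text{-cohomological}\}$, contradicting the hypothesis that this union is a set. Therefore no such $\sfL$ exists, every localizing subcategory of $\sfT$ has a generating set, and the two collections coincide.

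I do not expect any serious obstacle: the argument is a direct contrapositive combining the transfinite construction from Lemma~\ref{lem:locset} with Corollary~\ref{cor:set=cohom}. The only mildly delicate point is to confirm that the construction really produces subcategories generated by a set (as opposed to merely well-generated subcategories), so that Corollary~\ref{cor:set=cohom} applies term-by-term; this is immediate because a set-indexed union of sets is a set.
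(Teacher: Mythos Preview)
Your argument is correct and, in fact, cleaner than the paper's own proof. You directly recycle the transfinite chain from Lemma~\ref{lem:locset}: assuming some $\sfL$ lacks a generating set, you produce an ordinal-indexed strictly increasing chain $(\sfL_\alpha)$ of localizing subcategories each generated by a set, and this proper class injects into the hypothesised set of $\alpha$-cohomological subcategories (via Corollary~\ref{cor:set=cohom}), a contradiction. This uses nothing beyond elementary set theory and the earlier lemma; in particular it does not need $\sfT$ to be well generated.

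The paper argues differently. It first chooses a single regular cardinal $\kappa$ large enough that every set-generated localizing subcategory is generated by $\kappa$-compact objects and every $\alpha$-cohomological subcategory is already $\kappa$-cohomological (using that $\sfT^\kappa$ is essentially small). Then, given $\sfL$ not set-generated, it sets $\sfL'=\Loc(\sfL\cap\sfT^\kappa)\subsetneq\sfL$, enlarges to $\sfL''=\Loc(\sfL',X)$ for some $X\in\sfL\setminus\sfL'$, and derives a contradiction because $\sfL''$ must be generated by its $\kappa$-compact objects, all of which already lie in $\sfL'$. This is a one-step contradiction rather than a transfinite one, but it leans on the well-generated structure to control $\sfT^\kappa$. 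Your route avoids that machinery entirely, at the modest cost of invoking the ordinal-indexed construction; given that the paper has just proved Lemma~\ref{lem:locset} by exactly that construction (and remarked that it does not use well-generatedness), your reuse of it is arguably the more economical choice here.

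One small stylistic point: you describe the argument as ``the contrapositive'', but what you actually write is a direct proof under the stated hypothesis, with an internal contradiction to establish that every $\sfL$ is set-generated. That is fine, but the word ``contrapositive'' is slightly misleading.
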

\begin{proof} 
We have seen in Corollary~\ref{cor:set=cohom} that being
$\alpha$-cohomological for some $\alpha$ is the same as being
generated by a set of objects. Thus the hypothesis asserts that there
are a set of localizing subcategories which have generating sets. From
this perspective it is clear we can pick a regular cardinal $\kappa$
such that every localizing subcategory of $\sfT$ which is generated by
a set is generated by $\kappa$-compact objects. Moreover, since the
union in the statement of the lemma is both a set and indexed by a
class, we conclude that the chain stabilises and so, taking $\kappa$
larger if necessary, we may also assume every $\alpha$-cohomological
localizing subcategory of $\sfT$ is $\kappa$-cohomological.

If the localising subcategories of $\sfT$ do not form a set then, as
there are a set of $\kappa$-cohomological localizing subcategories,
there must be a localizing subcategory $\sfL$ which is not generated
by a set of objects. In particular
\begin{displaymath} \sfL' = \Loc(\sfL \cap \sfT^\kappa) \subsetneq
\sfL.
\end{displaymath} But this is nonsense. Since $\sfL'$ is a proper
localizing subcategory of $\sfL$ we can find some object $X$ in $\sfL$
but not in $\sfL'$ and consider $\sfL'' = \Loc(\sfL', X)$. Clearly
$\sfL''$ is still contained in $\sfL$, it properly contains $\sfL'$,
it is generated by a set and hence $\kappa$-cohomological, and it
contains the $\kappa$-compact objects of $\sfL$. These are not
compatible statements: we have assumed $\kappa$ large enough so that
$\sfL''$ must be generated by the $\kappa$-compact objects it contains
but this contradicts $\sfL' \subsetneq \sfL''$.
\end{proof}

\section{Cohomological localizations for the projective line}

We now turn to the example we have in mind, namely $\sfD(\QCoh\PP^1)$
the unbounded derived category of quasi-coherent sheaves on
$\PP^1$. We first describe the thick subcategories of
$\sfD^\mathrm{b}(\Coh \PP^1)$, the bounded derived category of
coherent sheaves on $\PP^1$. We then recall the classifications of
smashing subcategories and of tensor ideals in $\sfD(\QCoh\PP^1)$. Finally,
we classify the ($\aleph_0$-)cohomological localizing
subcategories\textemdash{}there are no surprises and they are exactly
the ones which have been understood for some time.

It is of course possible that there are $\alpha$-cohomological
localizing subcategories for $\alpha > \aleph_0$ which we are not
aware of. It is in some sense tempting to guess that this is not the
case, i.e.\ that our list is already a complete list of localizing
subcategories, but there is no real evidence for this. We close by
making some remarks on the hurdles that such an `exotic' localization
would have to clear.

Before getting on with this let us recapitulate the connection with representation theory. By a result of Beilinson \cite{Be1978} there is a tilting object $T\in\Coh \PP^1$ which induces an exact equivalence
\[\RHom(T,-)\colon\sfD(\QCoh \PP^1)\stackrel{\sim}\longrightarrow\sfD(\Modu A)\]
where $\Modu A$ denotes the module category of
\[A=\End(T)\cong\left[\begin{smallmatrix}k&k^2\\
0&k\end{smallmatrix}\right].\] Note that $A$ is isomorphic to the path
algebra of the Kronecker quiver $\cdot\,
\genfrac{}{}{0pt}{}{\raisebox{-1.75pt}{$\longrightarrow$}}{\raisebox{1.75pt}{$\longrightarrow$}}\,
\cdot$ and this algebra is known to be of tame representation type. In
fact, the representation theory of this algebra amounts to the
classification of pairs of $k$-linear maps, up to simultaneous
conjugation. The finite dimensional representations were already known to Kronecker \cite{Kr1890}.

\subsection{Thick subcategories of the bounded derived category}

The structure of the lattice of thick subcategories of
$\sfD^\mathrm{b}(\Coh \PP^1)$, which we recall in this section, has
been known for some time; it can be computed by hand using the fact
that $\Coh \PP^1$ is tame and hereditary.

The structure of the coherent sheaves on $\PP^1$ is well known: there
is a $\ZZ$-indexed family of indecomposable vector bundles and a
1-parameter family of torsion sheaves for each point on $\PP^1$.

For each $i\in \ZZ$ one has a thick subcategory
\begin{displaymath} \Thick(\str(i)) = \add(\Sigma^j\str(i)\mid
j\in \ZZ) \cong \sfD^\mathrm{b}(k)
\end{displaymath} where the identifications follow from the
computation of the cohomology of $\PP^1$. These are the only proper
non-trivial thick subcategories which are generated by vector bundles
and are also the only thick subcategories which are not tensor ideals.
Thus we have a lattice isomorphism
\begin{displaymath} 
\{\text{thick subcategories of }\sfD^\mathrm{b}(\Coh
\PP^1) \text{ generated by vector bundles}\}\xrightarrow{\sim}  \ZZ
\end{displaymath}
where $\ZZ$ denotes the lattice given by the following Hasse diagram:
\[
  \xymatrix@=1em{
    &&\bullet\ar@{-}[dll]\ar@{-}[dl]\ar@{-}[dr]\ar@{-}[drr]\ar@{-}[d]\\
    \cdots\ar@{-}[drr]&\bullet\ar@{-}[dr]&\bullet\ar@{-}[d]&\bullet\ar@{-}[dl]&\cdots\ar@{-}[dll]\\
    &&\bullet }
\] 

This is a special case of a general result because the indecomposable
vector bundles are precisely the exceptional objects of
$\sfD^\mathrm{b}(\Coh \PP^1)$. For any hereditary artin algebra $A$
the thick subcategories of $\sfD^\mathrm{b}(\modu A)$ that are
generated by exceptional objects form a poset which is isomorphic to
the poset of non-crossing partitions given by the Weyl group $W(A)$;
see \cite{HK2016, IT2009}. Note that $W(A)$ is an affine Coxeter
group of type $\tilde A_1$ for the Kronecker algebra $A=\left[\begin{smallmatrix}k&k^2\\
    0&k\end{smallmatrix}\right]$, keeping in mind the derived
equivalence
\[\sfD^\mathrm{b}(\Coh
  \PP^1)\xrightarrow{\sim} \sfD^\mathrm{b}(\modu A).\]

The thick tensor ideals are classified by
$\Spc \sfD^\mathrm{b}(\Coh \PP^1)\cong \PP^1$, where the space
$\Spc \sfD^\mathrm{b}(\Coh \PP^1)$ is meant in the sense of Balmer
\cite{BaSpec}, and its computation is a special case of a general result of
Thomason \cite{Th1997}. What all this boils down to is that for any
set of closed points $\mcV$ of $\PP^1$ there is a thick tensor ideal
\begin{displaymath} \sfD_\mcV^\mathrm{b}(\Coh \PP^1) := \{E \mid\supp
  E\subseteq \mcV\} = \Thick(k(x)\mid x\in \mcV)
\end{displaymath} consisting of complexes of torsion sheaves supported on
$\mcV$. Moreover, together with $0$ and $\sfD^\mathrm{b}(\Coh
\PP^1)$ this is a complete list of thick tensor ideals. One can make
this uniform by considering subsets of $\PP^1$ which are
specialization closed. In this language, by extending the above notation to allow 
$\sfD_\varnothing^\mathrm{b}(\Coh \PP^1) = 0$ and
$\sfD_{\PP^1}^\mathrm{b}(\Coh \PP^1) = \sfD^\mathrm{b}(\Coh \PP^1)$,
we have a lattice isomorphism
\begin{displaymath} 
\{\text{thick tensor ideals of }\sfD^\mathrm{b}(\Coh \PP^1)\}\xrightarrow{\sim}
\{\text{spc subsets of }\PP^1\},
\end{displaymath}
where `spc' is an abbreviation for `specialization closed', which is given by
\begin{displaymath}
\sfI \mapsto \supp \sfI = \bigcup_{E\in \sfI}\supp E \quad\text{and}\quad \mcV \mapsto \sfD_\mcV^\mathrm{b}(\Coh \PP^1)
\end{displaymath}
for $\sfI$ a thick tensor ideal and $\mcV$ a specialization closed subset.

We know every object of $\sfD^\mathrm{b}(\Coh \PP^1)$ is a direct sum
of shifts of line bundles and torsion sheaves and so one can readily
combine these classifications to obtain a lattice
isomorphism\footnote{Let $L',L''$ be a pair of lattices with smallest
  elements $0',0''$ and greatest elements $1',1''$. Then
  $L'\amalg L''$ denotes the new lattice which is obtained from the
  disjoint union $L'\cup L''$ (viewed as sum of posets) by identifying
  $0'=0''$ and $1'=1''$.}
\begin{displaymath} 
\{\text{thick subcategories of }\sfD^\mathrm{b}(\Coh \PP^1)\}\xrightarrow{\sim}
\{\text{spc subsets of }\PP^1\} \amalg \ZZ.
\end{displaymath}

The verification that the evident bijection is indeed a lattice map as claimed is elementary: the twisting sheaves
are supported everywhere so are not contained in any proper ideal, and
any twisting sheaf and a torsion sheaf, or any pair of distinct
twisting sheaves, generate the category. Thus for $i\neq j$ and
$\mathcal{V}$ proper non-empty and specialization closed in $\PP^1$ we
have
\begin{displaymath} \sfD_\mathcal{V}^\mathrm{b}(\Coh \PP^1)\vee
\Thick(\str(i)) = \sfD^\mathrm{b}(\Coh \PP^1) = \Thick(\str(i)) \vee
\Thick(\str(j))
\end{displaymath} and
\begin{displaymath} \sfD_\mathcal{V}^\mathrm{b}(\Coh \PP^1) \wedge
\Thick(\str(i)) = 0 = \Thick(\str(i)) \wedge \Thick(\str(j)).
\end{displaymath}

\subsection{Ideals and smashing subcategories}\label{sec:smashing}

We now describe the localizing subcategories that one easily
constructs from our understanding of the compact objects
$\sfD^\mathrm{b}(\Coh \PP^1)$ in $\sfD(\QCoh \PP^1)$.

By \cite{KS2010} the smashing conjecture holds for $\sfD(\QCoh
\PP^1)$ (our computations will also essentially give a direct proof of this fact). Thus the finite localizations one obtains by inflating the
thick subcategories listed above exhaust the smashing localizations
i.e.\
\begin{displaymath}
\{\text{thick subcategories of }\sfD^\mathrm{b}(\Coh \PP^1)\}\xrightarrow{\sim} \{\text{smashing subcategories of }\sfD(\QCoh \PP^1)\}
\end{displaymath}

The localizing ideals are also understood. Again this is known more
generally (there is such a classification for any locally noetherian
scheme, see \cite{AJS}) but can easily be computed by hand for
$\PP^1$. The precise statement is that there is a lattice isomorphism
\begin{displaymath} 
\{\text{localizing tensor ideals of }\sfD(\QCoh
\PP^1)\}\xrightarrow{\sim}  2^{\PP^1}
\end{displaymath} where $2^{\PP^1}$ denotes the powerset of $\PP^1$
with the obvious lattice structure. The bijection is given by the
assignments
\begin{displaymath} \sfL \mapsto \{x\in \PP^1 \mid k(x) \otimes
\sfL \neq 0\}
\end{displaymath} for a localizing ideal $\sfL$ and
\begin{displaymath} \mcV\mapsto \Gamma_\mcV\sfD(\QCoh\PP^1):=\{X \in
  \sfD(\QCoh \PP^1)\mid X\otimes k(y) \cong 0 \text{ for }  y\notin
  \mcV\}
\end{displaymath} for a subset $\mcV$ of points on $\PP^1$.

This agrees with the classification of smashing subcategories in the
sense that the smashing ideals are precisely those inflated from the
compacts, i.e.\ those corresponding to specialization closed subsets
of points. Since $\PP^1$ is $1$-dimensional the only new localizing
ideals that occur are obtained by throwing the residue field of the
generic point, $k(\eta)$, into a finite localization.

Thus we have identified a sublattice consisting of a copy of $\ZZ$ and
the powerset of $\PP^1$ inside the lattice of localizing subcategories
of $\sfD(\QCoh \PP^1)$. The lattice structure extends that of the
lattice of thick subcategories of
$\sfD^\mathrm{b}(\Coh \PP^1)$ in the expected way. The naive guess is
that this is, in fact, the whole lattice. While we do not know if this
is the case, we can give an intrinsic definition of the localizations
we have stumbled into so far. This description is the goal of the next two
subsections.

\subsection{An aside on continuous pure-injectives}

In order to describe the localizations we have listed so far a word on
continuous pure-injectives is required.

\begin{defn} A pure-injective object $I$ is \emph{continuous} (or
\emph{superdecomposable}) if it has no indecomposable direct summands.
\end{defn}

We say that $\sfT$ \emph{has no continuous pure-injective objects} if
every non-zero pure-injective object has an indecomposable direct
summand or, in other words, if there are no continous pure-injectives. An equivalent condition is that every pure-injective object
is the pure-injective envelope of a coproduct of indecomposable
pure-injective objects.

\begin{prop} The category $\sfD(\QCoh \PP^1)$ has no continuous
pure-injective objects.
\end{prop}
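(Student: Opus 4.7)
The plan is to exploit the Beilinson tilting equivalence $\sfD(\QCoh\PP^1)\xrightarrow{\sim}\sfD(\Modu A)$, with $A$ the Kronecker algebra, so as to reduce the statement to one about $A$-modules. Because $A$ is hereditary, every object of $\sfD(\Modu A)$ decomposes canonically as $X\cong\bigoplus_{n\in\ZZ}\Sigma^n H^n(X)$, as recalled in Section~\ref{prelim:P1}.

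The key intermediate step I would establish is that $X\in\sfD(\Modu A)$ is pure-injective if and only if every cohomology module $H^n(X)$ is a pure-injective $A$-module in the abelian sense. The forward direction uses that $H^n(X)$ is a direct summand of $X$ (up to shift), together with a comparison between pure-injectivity in the triangulated setting and pure-injectivity in $\Modu A$ for stalk complexes. The reverse direction requires that the decomposition of $X$ into its shifted cohomologies respects the pure structure, an argument that uses the hereditary hypothesis in an essential way.

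With this reduction in place, the statement becomes the classical assertion that the Kronecker algebra admits no superdecomposable pure-injective modules, a result which follows from Ringel's work cited in the introduction. Concretely, the indecomposable pure-injective $A$-modules form an explicit list\textemdash{}finite-dimensional indecomposables, Pr\"ufer modules and adic modules at each point of $\PP^1$, and the generic module\textemdash{}and every non-zero pure-injective $A$-module contains one as a direct summand. Applied to any non-zero $H^n(X)$ this produces the required indecomposable summand of $X$, showing that $X$ cannot be continuous.

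I expect the intermediate step to be the main obstacle. The target of the restricted Yoneda functor, $\Modu\sfD^\mathrm{b}(\modu A)$, is not simply a product of copies of $\Modu(\modu A)$, because $\sfD^\mathrm{b}(\modu A)$ carries non-trivial $\Ext^1$'s between consecutive cohomological degrees. Nevertheless the hereditary hypothesis keeps these interactions tractable\textemdash{}any injective in $\Modu\sfD^\mathrm{b}(\modu A)$ should itself split according to cohomological degree\textemdash{}and this is where the bulk of the technical work should go.
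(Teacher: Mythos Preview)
Your approach is essentially identical to the paper's: pass through the tilting equivalence to $\sfD(\Modu A)$, use that $A$ is hereditary to decompose any object into stalk complexes, reduce to showing a stalk summand of a pure-injective is a pure-injective $A$-module, and then invoke the known absence of superdecomposable pure-injective Kronecker modules. The paper cites Jensen--Lenzing \cite[Theorem~8.58]{JL1989} rather than Ringel for this last fact and is terser about the intermediate step you flag, simply asserting that a degree-zero summand ``identifies with a pure-injective $A$-module''.
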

\begin{proof} Let $A=\left[\begin{smallmatrix}k&k^2\\
0&k\end{smallmatrix}\right]$ denote the Kronecker algebra.  We use the
derived equivalence $\sfD(\QCoh \PP^1)\xrightarrow{\sim}\sfD(\Modu
A)$. Let $X$ be a pure-injective object in $\sfD(\Modu A)$.  Observe
that $X$ decomposes into a coproduct $X=\coprod_{n\in\mathbb Z}X_n$ of
complexes with cohomology concentrated in a single degree, since $A$
is a hereditary algebra. Thus we may assume that $X$ is concentrated
in degree zero and identifies with a pure-injective $A$-module. Now
the assertion follows from the description of the pure-injective
$A$-modules in \cite[Theorem~8.58]{JL1989}.
\end{proof}

\begin{cor}\label{cor:leftperp} A localizing subcategory
$\sfL\subseteq \sfD(\QCoh\PP^1)$ is cohomological if and only if there
is a collection of indecomposable pure-injective objects $(Y_i)_{i\in
I}$ in $\QCoh\PP^1$ such that $\sfL=\{X\in\sfD(\QCoh\PP^1)\mid
\Hom(X,\Sigma^jY_i)=0\text{ for all }i\in I, j\in \ZZ\}$.
\end{cor}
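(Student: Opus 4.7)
The ``if'' direction is immediate from Proposition~\ref{prop:cohomological}: the suspension-closed collection $\{\Sigma^j Y_i \mid i\in I,\ j\in\ZZ\}$ forms a suspension-stable family of pure-injective objects in $\sfD(\QCoh\PP^1)$, so the common left orthogonal is a cohomological localizing subcategory.

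For the ``only if'' direction, set $\sfT = \sfD(\QCoh\PP^1)$ and begin from Proposition~\ref{prop:cohomological}, which supplies a suspension-stable family of pure-injectives $(Z_i)_{i\in I}$ with $\sfL = \{X \mid \Hom(X,Z_i)=0\}$. Using the derived equivalence with $\sfD(\Modu A)$ for the hereditary Kronecker algebra, every object of $\sfT$ splits as a coproduct of its shifted cohomology sheaves, so each $Z_i = \bigoplus_n Z_{i,n}[-n]$ with $Z_{i,n}\in \QCoh\PP^1$ pure-injective. Since $\Hom(X,-)$ converts coproducts to products, one may replace the family by $(Z_{i,n}[-n])$, still suspension-stable, and assume every member lies in $\QCoh\PP^1$ up to shift. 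Enlarge further to the collection $\mcI$ of \emph{all} indecomposable pure-injective sheaves $Y_0 \in \QCoh\PP^1$ for which $\Sigma^j Y_0 \in \sfL^\perp$ for every $j\in\ZZ$; the inclusion $\sfL \subseteq \{X \mid \Hom(X,\Sigma^j Y_0)=0\ \forall Y_0\in\mcI, \forall j\in\ZZ\}$ is automatic.

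For the reverse inclusion, given $X$ in the right-hand side, the $\sfL$-localization triangle $L_X \to X \to W_X \to \Sigma L_X$ has $W_X \in \sfL^\perp$; it suffices to show $W_X=0$. Suppose not. By heredity, some shifted cohomology summand $W_X^n[-n] \in \sfL^\perp$ is nonzero. Its image $H_\sfT(W_X^n[-n])$ under the restricted Yoneda functor is torsion-free for the hereditary torsion theory on $\Modu\sfT^c$ cut out by $\sfL$, so its injective envelope $N$ remains torsion-free. The corresponding pure-injective $E \in \sfT$ with $H_\sfT E \cong N$ has no continuous direct summand by the preceding proposition, and hence splits off an indecomposable pure-injective $Y$. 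As a summand of $E \in \sfL^\perp$ we have $Y \in \sfL^\perp$ (which is closed under shifts), and by heredity $Y = \Sigma^m Y_0$ for a unique $Y_0 \in \QCoh\PP^1$ and $m\in\ZZ$, placing $Y_0 \in \mcI$. Essentiality of $H_\sfT(W_X^n[-n]) \hookrightarrow N$ forces a nonzero morphism $H_\sfT(W_X^n[-n]) \to H_\sfT Y$, which lifts via pure-injectivity of $Y$ to a nonzero $W_X^n[-n] \to Y$ in $\sfT$; precomposing with the projection $W_X \to W_X^n[-n]$ and the localization map $X \to W_X$ (using that $\Hom(\Sigma L_X, Y)=0$ for $\Sigma L_X \in \sfL$, $Y \in \sfL^\perp$, so that $\Hom(W_X, Y) \to \Hom(X,Y)$ is injective) yields a nonzero $X \to \Sigma^m Y_0$, contradicting the hypothesis.

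The main obstacle I expect is the detailed transfer between $\sfT$ and $\Modu\sfT^c$ via $H_\sfT$: verifying that $\sfL^\perp$ corresponds to the torsion-free class of a hereditary torsion theory on $\Modu\sfT^c$ (so that injective envelopes of torsion-free objects stay torsion-free), and that essentiality in the abelian category combined with pure-injectivity of $Y$ genuinely produces the required nonzero morphism back in $\sfT$ — both resting on the careful description of the equivalence between pure-injectives in $\sfT$ and injectives in $\Modu\sfT^c$.
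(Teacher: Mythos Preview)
Your argument is substantially more elaborate than needed, and two of its steps are problematic.

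First, the claim that ``$\Hom(X,-)$ converts coproducts to products'' is false for general $X$; this only holds when $X$ is compact. The step can be salvaged --- since $Z_i=\bigoplus_n Z_{i,n}[-n]$ is pure-injective and the canonical map $\bigoplus_n Z_{i,n}[-n]\to\prod_n Z_{i,n}[-n]$ is a pure monomorphism, $Z_i$ is a direct summand of the product --- but as written the justification is wrong.

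Second, and more seriously, the assertion that $H_\sfT(W_X^n[-n])$ is torsion-free for ``the hereditary torsion theory on $\Modu\sfT^c$ cut out by $\sfL$'' is exactly the obstacle you flag at the end, and you have not resolved it. The localizing subcategory $\Ker\bar H\subseteq\Modu\sfT^c$ from Proposition~\ref{prop:cohomological} need not coincide with the image of $\sfL$ under $H_\sfT$, and it is not automatic that $W\in\sfL^\perp$ forces $\Hom_{\Modu\sfT^c}(\Ker\bar H,\,H_\sfT W)=0$. Without this, you cannot conclude that the injective envelope $N$ lies in the right place, so the pure-injective $E$ you construct might not belong to $\sfL^\perp$.

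The paper's proof avoids all of this. Starting from the collection $(Z_i)$ supplied by Proposition~\ref{prop:cohomological}, it simply replaces each $Z_i$ by its indecomposable summands: the absence of continuous pure-injectives means each $Z_i$ is the pure-injective envelope of a coproduct $\bigoplus_\alpha Y_{i,\alpha}$ of indecomposable pure-injectives, and since $\bigoplus_\alpha Y_{i,\alpha}\to\prod_\alpha Y_{i,\alpha}$ is a pure monomorphism into a pure-injective, $Z_i$ is a summand of $\prod_\alpha Y_{i,\alpha}$. Hence ${}^\perp Z_i=\bigcap_\alpha{}^\perp Y_{i,\alpha}$, and one is done in a single line, with heredity of $\QCoh\PP^1$ ensuring the $Y_{i,\alpha}$ are shifts of honest sheaves. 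No detour through $\Modu\sfT^c$, torsion theories, or localization triangles is required.
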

\begin{proof} By Proposition~\ref{prop:cohomological} being
cohomological is equivalent to being the left perpendicular of a
collection of pure-injective objects. By the last proposition
$\sfD(\QCoh \PP^1)$ has no continuous pure-injectives and so we may
replace such a collection of pure-injectives with the collection of
its indecomposable summands without changing the left perpendicular. These are all honest sheaves since $\QCoh\PP^1$ is hereditary.
\end{proof}

\subsection{Classifying cohomological localizations}

In this section we give a classification of the cohomological
localizing subcategories of $\sfD(\QCoh \PP^1)$. As we will show in
Theorem~\ref{thm:classification} they are precisely the subcategories
described in Section~\ref{sec:smashing}. Our strategy is to use
Corollary~\ref{cor:leftperp} and the classification of indecomposable
pure-injectives for $\sfD(\QCoh \PP^1)$ to compute everything
explicitly; we can compute the set of indecomposable pure-injectives
associated to each of the localizing subcategories described in
Section~\ref{sec:smashing} and show any suspension stable set of pure-injectives has the same left perpendicular as one of these.

To this end we first recall the description of the indecomposable
pure-injective objects of $\QCoh \PP^1$. Let us set up a little
notation: given a closed point $x\in \PP^1$ we can consider the
corresponding map of schemes
\begin{displaymath} i_x\colon \Spec \str_{\PP^1,x} \lto \PP^1.
\end{displaymath} We denote the maximal ideal of $\str_{\PP^1,x}$ by
$\mathfrak{m}_x$ and the residue field $\str_{\PP^1,x}/\mathfrak{m}_x$
by $k(x)$. Let $E(x)$ be the injective envelope of the residue field
$k(x)$, and $A(x)$ the $\mathfrak{m}_x$-adic completion of
$\str_{\PP^1,x}$, which is the Matlis dual of $E(x)$. Pushing these
forward along $i_x$ gives objects in $\QCoh \PP^1$ which we denote by
\begin{displaymath} \mcE(x) = {i_x}_*E(x) \quad \text{and} \quad
\mcA(x) = {i_x}_*A(x).
\end{displaymath} 

\begin{prop} The indecomposable pure-injective quasi-coherent sheaves
are given by the following disjoint classes of sheaves:
\begin{itemize}
\item the indecomposable coherent sheaves;
\item the Pr\"ufer sheaves $\mcE(x)$ for $x\in \PP^1$ closed;
\item the adic sheaves $\mcA(x)$ for $x\in \PP^1$ closed;
\item the sheaf of rational functions $k(\eta)$.
\end{itemize}
\end{prop}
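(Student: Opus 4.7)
The plan is to piggyback on the reasoning in the proof of the previous proposition: I would reduce to the classification of indecomposable pure-injective modules over the Kronecker algebra $A$ via the derived equivalence $\sfD(\QCoh\PP^1)\xrightarrow{\sim}\sfD(\Modu A)$, and then translate each class back into sheaves on $\PP^1$.

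For the reduction, I would observe that since both $\QCoh\PP^1$ and $\Modu A$ are hereditary, the argument from the previous proof shows that every pure-injective object of either derived category is a coproduct of pure-injectives concentrated in a single degree; hence the classification reduces to the abelian category level. The derived equivalence preserves compact objects and therefore intertwines the two restricted Yoneda embeddings (and the resulting notion of pure-injectivity), so it restricts to a bijection between indecomposable pure-injective quasi-coherent sheaves on $\PP^1$ and indecomposable pure-injective $A$-modules.

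I would then invoke \cite[Theorem~8.58]{JL1989}, which classifies the indecomposable pure-injective $A$-modules into four types: the indecomposable finite-dimensional modules, a Pr\"ufer and an adic module for each tube in the Auslander--Reiten quiver of $\Modu A$, and a single generic module. Grothendieck's classification \cite{Gr1957} identifies the indecomposable coherent sheaves on $\PP^1$ with the twisted line bundles $\str(i)$ together with the finite-length torsion sheaves at closed points, and under the equivalence the tubes are indexed by the closed points $x\in\PP^1$. The Pr\"ufer object in the tube at $x$ is the filtered colimit of the length-$n$ torsion sheaves at $x$; on stalks this computes to the injective envelope $E(x)$ of $k(x)$ over $\str_{\PP^1,x}$, so pushing forward along $i_x$ yields $\mcE(x)$. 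Dually the adic object is the corresponding inverse limit, which on stalks gives the $\mathfrak{m}_x$-adic completion $A(x)$, and hence equals $\mcA(x)$. The generic module is the unique torsion-free indecomposable of rank one, which can only correspond to $k(\eta)$.

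The main obstacle is the identification of the Pr\"ufer and adic objects in the tube at $x$ with the sheaves $\mcE(x)$ and $\mcA(x)$. This requires carefully matching the tube at $x$ with the category of $\mathfrak{m}_x$-torsion $\str_{\PP^1,x}$-modules, where the analogous Pr\"ufer/adic classification over a discrete valuation ring is standard, and then tracking (co)limits through the pushforward $i_{x*}$. The rest of the argument is a formal consequence of the derived equivalence together with the reductions already used in the previous proposition.
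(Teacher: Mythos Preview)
Your approach is correct and essentially identical to the paper's: reduce via the derived equivalence $\sfD(\QCoh\PP^1)\xrightarrow{\sim}\sfD(\Modu A)$ and invoke \cite[Theorem~8.58]{JL1989}. The paper's proof is in fact terser than yours---it simply asserts the correspondence and cites Jensen--Lenzing without spelling out how the four classes of Kronecker modules match the four classes of sheaves---so the translation work you outline (matching tubes to closed points, identifying the Pr\"ufer and adic objects with $\mcE(x)$ and $\mcA(x)$, and the generic module with $k(\eta)$) is additional detail the paper leaves implicit.
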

\begin{proof}
  The indecomposable pure-injective quasi-coherent sheaves correspond
  to the indecomposable pure-injective modules over the Kronecker
  algebra via the derived equivalence
  $\sfD(\QCoh \PP^1)\xrightarrow{\sim}\sfD(\Modu A)$. The latter have beeen classified in \cite[Theorem~8.58]{JL1989}.
\end{proof}

\begin{rem} The Pr\"ufer sheaves and $k(\eta)$ are precisely the
indecomposable injective quasi-coherent sheaves.
\end{rem}

Having recalled the indecomposable pure-injective sheaves we now
determine how they interact with the localizations described in
Section~\ref{sec:smashing}. Let us begin by recording their supports.

\begin{lem}\label{lem:supp} We have
\begin{displaymath} \supp \mcE(x) = \{x\}, \; \supp k(\eta) =
\{\eta\}, \;\text{and}\; \supp \mcA(x) = \{x,\eta\}.
\end{displaymath}
\end{lem}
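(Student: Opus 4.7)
The plan is to reduce everything to a local computation on $\Spec \str_{\PP^1,x}$, which is the spectrum of a DVR $R$ with maximal ideal $\mathfrak{m}_x = (t)$, residue field $k = k(x)$, and fraction field $K = k(\eta)$. Since $\mcE(x)$ and $\mcA(x)$ are pushforwards along $i_x$, their stalks at closed points $x' \neq x$ vanish, so only $y = x$ and $y = \eta$ are candidates for membership in the support. For $k(\eta)$, which is the constant sheaf at the generic point, the stalk at every point is $K$, so the computation at a closed point $y = x$ also reduces to a local computation over the DVR at $x$.

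First I would compute $k \otimes^L_R M$ for $M \in \{E(x), A(x), K\}$ using the Koszul resolution $0 \to R \xrightarrow{t} R \to k \to 0$, which gives $H^{-1} = M[t]$ (the $t$-torsion) and $H^0 = M/tM$. For $E(x) = K/R$, divisibility forces $tE(x) = E(x)$ while the $t$-torsion is $\tfrac{1}{t}R/R \cong k$, so $k \otimes^L_R E(x) \simeq k[1] \ne 0$; hence $x \in \supp \mcE(x)$. For $A(x) = \hat R$, flatness gives $k \otimes^L_R \hat R = \hat R/\mathfrak{m}_x \hat R = k \ne 0$, so $x \in \supp \mcA(x)$. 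For $K$, both $t$-torsion and $K/tK$ vanish, so $k \otimes^L_R K = 0$ and therefore $x \notin \supp k(\eta)$.

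Next I would compute $K \otimes^L_R M$, which equals $K \otimes_R M$ since $K$ is flat. Tensoring the short exact sequence $0 \to R \to K \to K/R \to 0$ with $K$ gives $K \otimes_R K/R = 0$, so $\eta \notin \supp \mcE(x)$. For $\mcA(x)$, the module $\hat R \otimes_R K$ is the total ring of fractions of $\hat R$, which is nonzero, so $\eta \in \supp \mcA(x)$. Finally $k(\eta) \otimes^L k(\eta) = k(\eta) \ne 0$, so $\eta \in \supp k(\eta)$.

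There is no real obstacle here; the only care required is to justify that the restriction of $\mcE(x), \mcA(x)$ along $i_x$ really does reduce the global derived tensor computation to $k \otimes^L_R (-)$ and $K \otimes_R (-)$ on the DVR $R$, and to verify that stalks of $\mcE(x), \mcA(x)$ at closed points $x' \neq x$ vanish (which is immediate from $i_x$ factoring through the local ring at $x$). Assembling the three cases yields exactly the claimed supports.
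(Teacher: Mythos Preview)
Your approach is the same as the paper's---reduce to the local ring $R=\str_{\PP^1,x}$ and compute there---and your explicit DVR computations are all correct. There is, however, one genuine slip in the reduction step: the claim that the stalks of $\mcA(x)$ at closed points $x'\neq x$ vanish is false. For a quasi-coherent sheaf $\tilde M$ on $\Spec R$, any affine open neighbourhood of $x'$ avoiding $x$ pulls back along $i_x$ to the generic open $\Spec K\subseteq\Spec R$, so the stalk of ${i_x}_*\tilde M$ at $x'$ is $M\otimes_R K$. For $M=\hat R$ this is the fraction field of $\hat R$, which is nonzero. (For $M=E(x)=K/R$ your claim happens to be true, since $(K/R)\otimes_R K=0$.)

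The desired conclusion $x'\notin\supp\mcA(x)$ is nonetheless correct; you just need a different justification. One clean fix is the projection formula: since $i_x$ is flat and affine and $k(x')$ is perfect,
\[
k(x')\otimes^L{i_x}_*M\;\cong\;{i_x}_*\bigl(i_x^*k(x')\otimes^L M\bigr),
\]
and $i_x^*k(x')=0$ because the skyscraper at $x'$ has vanishing stalks at both points of $\Spec R$. Alternatively, note directly that the stalk $(\mcA(x))_{x'}\cong\hat R\otimes_R K$ is a $K$-vector space, hence torsion-free and divisible over the DVR $\str_{\PP^1,x'}$, so $k(x')\otimes^L_{\str_{\PP^1,x'}}(\mcA(x))_{x'}=0$. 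With this correction your argument goes through and agrees with the paper, which simply asserts the containment of supports in $\{x,\eta\}$ and declares the remaining local computation standard.
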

\begin{proof} All of these sheaves are pushforwards along the
inclusions of the spectra of local rings at points, and so their
supports are contained in the relevant subset $\Spec
\str_{\PP^1,x}$. Having reduced to computing the support over
$\str_{\PP^1,x}$ this is then a standard computation.
\end{proof}

As one would expect the localizations $\Loc(\str(i))$ are particularly
simple.

\begin{lem}\label{le:perp1}
  The only indecomposable pure-injective quasi-coherent sheaf in
  $\Loc(\str(i))^\perp$ is $\str(i-1)$.
\end{lem}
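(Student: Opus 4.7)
The plan is to test the orthogonality condition cohomologically and then run through the classification of indecomposable pure-injectives given in the preceding proposition. An indecomposable pure-injective $Y$ lies in $\Loc(\str(i))^\perp$ if and only if $\Ext^n_{\PP^1}(\str(i),Y)=0$ for every $n\in\ZZ$. Since $\str(i)$ is locally free of finite rank, $\sHom(\str(i),Y)\cong Y(-i)$ and so this Ext group is simply $H^n(\PP^1,Y(-i))$. As $\PP^1$ has cohomological dimension one, only the cases $n=0,1$ need to be considered.

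The vector bundle case $Y=\str(j)$ is settled by the classical calculation on $\PP^1$: $H^0(\PP^1,\str(j-i))$ vanishes precisely when $j-i<0$ and $H^1(\PP^1,\str(j-i))$ vanishes precisely when $j-i>-2$. These two conditions hold simultaneously exactly when $j=i-1$, which both certifies that $\str(i-1)$ lies in the orthogonal and shows it is the only vector bundle that does.

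It remains to exclude the non-vector-bundle indecomposable pure-injectives, namely the finite length coherent torsion sheaves, the Pr\"ufer sheaves $\mcE(x)$, the adic sheaves $\mcA(x)$, and the sheaf $k(\eta)$. Each of these is the pushforward, along the flat affine morphism $i_x\colon\Spec\str_{\PP^1,x}\to\PP^1$ or $j\colon\Spec k(\eta)\to\PP^1$, of a nonzero module on the source. Since the pullback of the line bundle $\str(-i)$ to the spectrum of a local ring (or field) is free, the projection formula gives $Y(-i)\cong Y$ in each case, so $H^0(\PP^1,Y(-i))=H^0(\PP^1,Y)$, and the latter recovers the underlying nonzero module ($\str_{\PP^1,x}/\mathfrak{m}_x^r$, $E(x)$, $A(x)$, or $k(\eta)$ respectively). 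Hence none of these sheaves belongs to $\Loc(\str(i))^\perp$, completing the classification.

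The argument is a routine case analysis once the orthogonality is translated to a cohomology calculation; the only step that requires any care is verifying that the non-vector-bundle pure-injectives are invariant under twisting by $\str(-i)$, which is immediate from the projection formula applied to their description as affine pushforwards from local rings.
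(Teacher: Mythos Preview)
Your argument is correct, but it takes a different route from the paper. The paper proceeds structurally: it uses that $\str(i)$ is exceptional to write down a semiorthogonal decomposition of $\sfD^\mathrm{b}(\Coh\PP^1)$ with pieces $\Thick(\str(i))$ and $\Thick(\str(i-1))$, then invokes Thomason's localization theorem to inflate this to $\Loc(\str(i))^\perp=\Loc(\str(i-1))=\Add(\Sigma^j\str(i-1)\mid j\in\ZZ)$, from which the statement is read off. In particular the paper identifies the entire right orthogonal, not merely which indecomposable pure-injectives it contains, and it does so without ever invoking the classification of pure-injectives.

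Your approach, by contrast, is an explicit case check against that classification: you translate membership in $\Loc(\str(i))^\perp$ into vanishing of $H^*(\PP^1,Y(-i))$ and then test each family of indecomposable pure-injectives directly. This is more elementary in that it avoids Thomason's theorem and the machinery of exceptional collections, relying only on Serre's computation of $H^*(\PP^1,\str(n))$ and the projection formula. The price is that it depends essentially on the preceding proposition (the list of indecomposable pure-injectives) and yields only the statement as phrased, not the stronger identification $\Loc(\str(i))^\perp=\Loc(\str(i-1))$ that the paper obtains along the way.
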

\begin{proof} There is a localization sequence for the compacts
\begin{displaymath} \xymatrix{ \Thick(\str(i)) \ar[r]<0.5ex>
\ar@{<-}[r]<-0.5ex> \ar[d]_-\wr & \sfD^\mathrm{b}(\Coh \PP^1)
\ar[r]<0.5ex> \ar@{<-}[r]<-0.5ex> & \Thick(\str(i-1)) \ar[d]^-\wr \\
\sfD^\mathrm{b}(k) & & \sfD^\mathrm{b}(k) }
\end{displaymath} where the adjoints exist since $\str(i)$ is
exceptional and the computation of the right orthogonal follows from
the computation of the cohomology of $\PP^1$. Applying Thomason's
localization theorem shows that
\begin{displaymath} \Loc(\str(i))^\perp = \Loc(\str(i-1)) =
\Add(\Sigma^j\str(i-1)\mid j\in \ZZ)
\end{displaymath} and the claim is then immediate.
\end{proof}

We next compute the pure-injectives lying in the right perpendicular
of the localizing ideals.

\begin{lem}\label{le:perp2}
Let $\mcV$ be a set of closed points with complement
$\mcU$. Then the indecomposable pure-injective sheaves in
$\Gamma_\mcV\sfD(\QCoh \PP^1)^\perp$ are:
\begin{itemize}
\item the indecomposable coherent sheaves supported at closed points
in $\mcU$;
\item the Pr\"ufer sheaves $\mcE(x)$ for $x\in \mcU$;
\item the adic sheaves $\mcA(x)$ for $x\in \mcU$;
\item the sheaf of rational functions $k(\eta)$.
\end{itemize}
\end{lem}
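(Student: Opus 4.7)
The plan is to reduce the perpendicularity condition to an explicit condition on sheaves and then go through the classification of indecomposable pure-injective sheaves one class at a time. Because $k(x)\otimes\str(i)\cong k(x)$ for every closed point $x$, the localizing tensor ideal $\Gamma_\mcV\sfD(\QCoh\PP^1)$ coincides with $\Loc(k(x)\mid x\in\mcV)$, so membership in its right perpendicular amounts to the vanishing of $\Hom(k(x)[j],Y)$ for all $x\in\mcV$ and all $j\in\ZZ$. Since $\QCoh\PP^1$ is hereditary and every indecomposable pure-injective of $\sfD(\QCoh\PP^1)$ is a shift of a sheaf, the problem reduces to identifying the indecomposable pure-injective sheaves $Y$ satisfying $\Hom(k(x),Y)=0$ and $\Ext^1(k(x),Y)=0$ for every $x\in\mcV$.

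The main tool for the easy direction is that $k(x)$ is a skyscraper at the closed point $x$, so $\Hom(k(x),-)$ and $\Ext^1(k(x),-)$ both factor through the stalk at $x$ and hence vanish whenever $x\notin\supp Y$. Combining this with Lemma~\ref{lem:supp} immediately places $k(\eta)$, the Pr\"ufer sheaves $\mcE(x)$ with $x\in\mcU$, the adic sheaves $\mcA(x)$ with $x\in\mcU$, and all indecomposable coherent torsion sheaves supported in $\mcU$ into the perpendicular.

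For the converse I would perform a small stalk-level calculation at each $x\in\mcV$: the socle of $\mcE(x)$ is $k(x)$, giving $\Hom(k(x),\mcE(x))\neq 0$; applying $\Hom(-,\mcA(x))$ to the sequence $0\to\str_{\PP^1,x}\to\str_{\PP^1,x}\to k(x)\to 0$ obtained from multiplication by a uniformizer exhibits $\Ext^1(k(x),\mcA(x))\cong k(x)$; and any indecomposable coherent torsion sheaf supported at $x$ admits a nonzero map to or from $k(x)$. The only class not dealt with by support is the line bundles $\str(i)$, whose support is all of $\PP^1$. For any closed $x$, Serre duality gives $\Ext^1(k(x),\str(i))\cong\Hom(\str(i),k(x)\otimes\str(-2))^{*}\neq 0$, so no line bundle survives whenever $\mcV$ is non-empty. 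Collecting the indecomposable pure-injective sheaves not ruled out yields exactly the list in the statement. I expect this line-bundle computation to be the most delicate step, as it is the only place where the support argument does not directly settle matters.
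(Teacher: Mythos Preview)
Your argument is correct, but it takes a more hands-on route than the paper. The paper's proof is essentially one sentence: by the classification of localizing tensor ideals of $\sfD(\QCoh\PP^1)$, the right perpendicular $\Gamma_\mcV\sfD(\QCoh\PP^1)^\perp$ consists precisely of the objects whose support lies in $\mcU$; since $\mcV$ consists only of closed points we have $\eta\in\mcU$, and the list then drops out of Lemma~\ref{lem:supp} together with the obvious supports of coherent torsion sheaves and line bundles.

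You instead reduce to the generating set $\{k(x)\mid x\in\mcV\}$ and verify the vanishing of $\Hom(k(x),-)$ and $\Ext^1(k(x),-)$ against each class of indecomposable pure-injectives individually. This is more elementary in that it only uses that $\Gamma_\mcV\sfD(\QCoh\PP^1)$ is generated by the residue fields, rather than the full bijection between ideals and subsets and the identification of the perpendicular as a support condition. The trade-off is that you reprove, case by case, what the support formalism packages uniformly: your ``most delicate step'' with Serre duality for the line bundles evaporates in the paper's approach, since $\supp\str(i)=\PP^1\not\subseteq\mcU$ once $\mcV\neq\varnothing$. One small point: your justification that $\Gamma_\mcV\sfD(\QCoh\PP^1)=\Loc(k(x)\mid x\in\mcV)$ via $k(x)\otimes\str(i)\cong k(x)$ only shows that the right-hand side is a tensor ideal; the equality itself still leans on the ideal classification (or an equivalent local-to-global argument), so the two approaches are not as far apart as they might first appear.
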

\begin{proof} By the classification of localizing ideals of
$\sfD(\QCoh \PP^1)$ we know that the category $\Gamma_\mcV\sfD(\QCoh \PP^1)^\perp$
consists of precisely those objects supported on $\mcU$. Since $\mcV$
consists of closed points we know $\mcU$ contains the generic point
$\eta$. The list is then an immediate consequence of
Lemma~\ref{lem:supp}.
\end{proof}

\begin{lem}\label{le:perp3}
  Let $\mcV$ be a subset of $\PP^1$ containing the generic point and
  denote its complement by $\mcU$. Then the indecomposable
  pure-injective sheaves in $\Gamma_\mcV\sfD(\QCoh \PP^1)^\perp$ are:
\begin{itemize}
\item the indecomposable coherent sheaves supported at closed points
in $\mcU$;
\item the adic sheaves $\mcA(x)$ for $x\in \mcU$.
\end{itemize}
\end{lem}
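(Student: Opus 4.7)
The plan is to bootstrap from Lemma~\ref{le:perp2} by separating the generic point. Decompose $\mcV = \{\eta\}\cup\mcV'$ where $\mcV' = \mcV\setminus\{\eta\}$ consists of closed points; then $\mcU$ is the set of closed points lying outside $\mcV'$. By the classification of localizing tensor ideals recalled in Section~\ref{sec:smashing}, $\Gamma_\mcV\sfD(\QCoh\PP^1)$ is the localizing tensor ideal generated by $\{k(y) \mid y\in\mcV\}$, and hom--tensor adjunction gives
\[
\Gamma_\mcV\sfD(\QCoh\PP^1)^\perp = \Gamma_{\mcV'}\sfD(\QCoh\PP^1)^\perp \cap \{Y \mid \RHom(k(\eta), Y) = 0\}.
\]
Lemma~\ref{le:perp2} applied to $\mcV'$ describes the first factor, so it suffices to determine which indecomposable pure-injectives from that list also satisfy $\RHom(k(\eta), -) = 0$.

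Apart from $k(\eta)$ itself (for which the condition fails trivially), every candidate is of the form ${i_x}_* Y_0$ for some $x \in \mcU$ and some module $Y_0$ over $R := \str_{\PP^1,x}$. Since $i_x$ is flat affine with $i_x^* k(\eta) = K := k(\eta)$ viewed as an $R$-module, the $(i_x^*, {i_x}_*)$-adjunction reduces the question to computing $\RHom_R(K, Y_0)$ over the DVR $R$, where $Y_0$ is the injective hull $E = E(x)$, a torsion module $R/\mathfrak{m}_x^n$, or the completion $\hat R = A(x)$. For $Y_0 = E$, the tautological quotient $K \twoheadrightarrow K/R = E$ gives $\Hom_R(K, E) \ne 0$, so $\mcE(x)$ is excluded. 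For the other two cases I apply $\RHom(-, Y_0)$ to the local triangle $R \to K \to E \to \Sigma R$; then $\RHom_R(R, Y_0) = Y_0$ is immediate, and $\RHom_R(E, Y_0) = Y_0[-1]$ follows by writing $E = \varinjlim R/\mathfrak{m}_x^n$ and using that $Y_0$ is $\mathfrak{m}_x$-adically complete. Tracing through the identifications shows that the connecting map $Y_0 \to Y_0$ is the identity, so $\RHom_R(K, Y_0) = 0$, and hence the indecomposable coherent sheaves supported at closed points in $\mcU$ together with the adic sheaves $\mcA(x)$ for $x \in \mcU$ all survive. The main technical subtlety is the vanishing $\RHom_R(K, \hat R) = 0$, which encodes the mutual right-orthogonality of $\mathfrak{m}_x$-local and derived $\mathfrak{m}_x$-complete modules in $\sfD(R)$.
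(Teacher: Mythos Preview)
Your proof is correct but takes a genuinely different route from the paper's. The paper argues categorically: it excludes the Pr\"ufer sheaves and $k(\eta)$ by observing that $k(\eta)$ maps nontrivially to every indecomposable injective, and then verifies that the torsion coherent sheaves and adic sheaves at points of $\mcU$ lie in the perpendicular by checking this for the residue fields $k(x)$ with $x\in\mcU$, invoking thickness of the right perpendicular to pass to all torsion coherent sheaves supported there, and finally using closure under homotopy limits to obtain the adic sheaves $\mcA(x)=\operatorname{holim}_n \str_x/\mathfrak{m}_x^n$. Your approach instead separates off the generic point, bootstraps from Lemma~\ref{le:perp2}, and reduces the remaining question to explicit $\RHom_R(K,-)$ computations over the DVR $R=\str_{\PP^1,x}$ via the $(i_x^*,{i_x}_*)$ adjunction and the localization triangle $R\to K\to E$. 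Your route makes the local algebra completely transparent and isolates the key vanishing $\RHom_R(K,\hat R)=0$ as a statement about derived completeness; the paper's route is shorter and leans on structural closure properties of $\Gamma_\mcV\sfD(\QCoh\PP^1)^\perp$ rather than any computation. One small caveat: in the edge case $\mcV'=\varnothing$ (so $\mcV=\{\eta\}$) your appeal to Lemma~\ref{le:perp2} is being used to exclude the line bundles $\str(i)$ from the candidate list, which that lemma does not literally cover when its $\mcV$ is empty; it is worth noting separately that $\Ext^1(k(\eta),\str(i))\neq 0$, which your DVR triangle also yields since $\Ext^1_R(K,R)\cong\hat R/R\neq 0$.
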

\begin{proof} The sheaf of rational functions $k(\eta)$ has a map
to every indecomposable injective sheaf and so no $\mcE(x)$ is
contained in the right perpendicular category (and clearly
$k(\eta)$ is not). The category $\Gamma_\mcV\sfD(\QCoh \PP^1)$
contains the torsion and adic sheaves for points in $\mcV$ so the only
indecomposable pure-injective sheaves which could lie in the right
perpendicular are those indicated; it remains to check they really
don't receive maps from objects of $\Gamma_\mcV\sfD(\QCoh \PP^1)$.

This is clear for the residue fields $k(x)$ for $x\in \mcU$, as they
cannot receive a map from any of the residue fields generating
$\Gamma_\mcV\sfD(\QCoh \PP^1)$. Since the right perpendicular is thick
it thus contains all the indecomposable coherent sheaves supported on
$\mcU$. Moreover, the right perpendicular is closed under homotopy
limits and so contains the corresponding adic sheaves $\mcA(x)$.
\end{proof}

We now know which subsets of indecomposable pure-injectives occur in
the right perpendiculars of the localizing subcategories we
understand. It's natural to ask for the minimal set giving rise to one
of these categories, as in Corollary~\ref{cor:leftperp}. Let us make
the convention that for an object $E\in \sfD(\QCoh \PP^1)$
\begin{displaymath} {}^\perp E = \{F\in \sfD(\QCoh \PP^1) \mid
\Hom(F,\Sigma^jE) = 0 \;\; \forall j\in \ZZ\}.
\end{displaymath} We can, without too much difficulty, compute all of
the left perpendiculars of the indecomposable pure-injectives.

\begin{lem}\label{lem:leftperps} 
  The left perpendicular categories to the suspension closures of the
  indecomposable pure-injectives are as follows:
\begin{enumerate}
\item ${}^\perp F = \Gamma_{\PP^1\setminus \{x\}}\sfD(\QCoh \PP^1)$
  for any $F\in\Coh\PP^1$ supported at  $x\in \PP^1$;
\item ${}^\perp\str(i) = \Loc(\str(i+1))$;
\item ${}^\perp\mcE(x) =\Gamma_{\PP^1\setminus\{x,\eta\}}\sfD(\QCoh \PP^1)$;
\item ${}^\perp \mcA(x) =\Gamma_{\PP^1\setminus \{x\}}\sfD(\QCoh \PP^1)$;
\item ${}^\perp k(\eta) = \Gamma_{\PP^1\setminus\{\eta\}}\sfD(\QCoh \PP^1)$.
\end{enumerate}
\end{lem}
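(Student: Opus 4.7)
The plan is to split into two parts: cases (1), (3), (4), (5), which I would treat uniformly via the classification of localizing tensor ideals recalled in Section~\ref{sec:smashing}, and case (2), where the argument is different because ${}^\perp\str(i)$ is not stable under twisting by line bundles.

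For each pure-injective $E$ in cases (1), (3), (4), (5), the first step is to observe that $E \otimes \str(i) \cong E$ for every $i \in \ZZ$. Each such $E$ is the pushforward $(i_z)_*M$ of a module along the inclusion $i_z$ of the spectrum of a local ring (at a closed point $x$, or at $\eta$), and $i_z^*\str(i)$ is a free rank-one module on that spectrum, so the projection formula gives $E \otimes \str(i) = (i_z)_*(M \otimes i_z^*\str(i)) \cong (i_z)_*M = E$. By the adjunction $\Hom(X \otimes \str(i), E[j]) = \Hom(X, (E \otimes \str(-i))[j]) = \Hom(X, E[j])$, the subcategory ${}^\perp E$ is therefore a localizing tensor ideal, so by the classification it equals $\Gamma_\mcV\sfD(\QCoh \PP^1)$ for the subset $\mcV = \{y \in \PP^1 \mid k(y) \in {}^\perp E\}$. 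Computing $\mcV$ reduces to a point-wise question: for any closed point $y$ different from $x$, the sheaves $k(y)$ and $E$ have disjoint stalk supports, so $\RHom(k(y), E) = 0$ is automatic, and it remains to check $y \in \{x, \eta\}$. These reduce to module computations over the DVR $R_x := \str_{\PP^1, x}$, in which only $\Hom$ and $\Ext^1$ can contribute as $R_x$ is hereditary. The key identities I would establish are: the socle inclusions $\Hom(k(x), F) \cong F_x[\mathfrak{m}_x] \neq 0$ and $\Hom(k(x), \mcE(x)) \cong k(x) \neq 0$; the completion identity $\Ext^1(k(x), \mcA(x)) \cong \hat R_x / t\hat R_x = k(x) \neq 0$; and the Matlis-dual computation $\Hom(k(\eta), \mcE(x)) \neq 0$ coming from the exact sequence $0 \to \hat R_x \to \Hom(k(\eta), E(x)) \to E(x) \to 0$ obtained by applying $\Hom(-, E(x))$ to $0 \to R_x \to k(\eta) \to E(x) \to 0$. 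The remaining vanishings $\Hom^*(k(\eta), F) = 0$ and $\Hom^*(k(\eta), \mcA(x)) = 0$ follow from the description of $\RHom(k(\eta),-)$ as $\lim$ and $\lim^1$ under $t$-transitions, using $k(\eta) = \varinjlim (R_x \xrightarrow{t} R_x \xrightarrow{t} \cdots)$: Mittag-Leffler kills the tower in the finite-length case, and in the complete case the equation $a_n - t a_{n+1} = b_n$ is solved by the convergent series $a_n = \sum_{k \geq 0} t^k b_{n+k}$ in $\hat R_x$. Assembling these yields $\mcV = \PP^1 \setminus \{x\}$ in (1) and (4), $\mcV = \PP^1 \setminus \{x, \eta\}$ in (3), and $\mcV = \PP^1 \setminus \{\eta\}$ in (5).

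For case (2), since $\str(i) \otimes \str(-j) = \str(i-j) \not\cong \str(i)$ when $j \neq 0$, the subcategory ${}^\perp\str(i)$ is not a tensor ideal and a different argument is required. Here I would use Lemma~\ref{le:perp1}: the smashing subcategory $\Loc(\str(i+1))$ has right orthogonal $\Add(\Sigma^j\str(i))$, so every $X \in \sfD(\QCoh \PP^1)$ fits in a localization triangle $X' \to X \to X'' \to X'[1]$ with $X' \in \Loc(\str(i+1))$ and $X'' \in \Add(\Sigma^j\str(i))$. Given $X \in {}^\perp\str(i)$, the semiorthogonality $\Hom(X', \Sigma^j \str(i)) = 0$ transfers the hypothesis through the long exact sequence obtained from $\Hom(-, \str(i)[j])$ to yield $\Hom(X'', \Sigma^j \str(i)) = 0$ for every $j$. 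Writing $X'' = \coprod_l \Sigma^{n_l}\str(i)$ and using $\Hom(\str(i), \Sigma^n\str(i)) = 0$ for $n \neq 0$, one computes $\Hom(X'', \Sigma^j \str(i)) = \prod_{l : n_l = j} k$; the vanishing of this product for every $j$ forces the indexing set to be empty, hence $X'' = 0$ and $X \cong X' \in \Loc(\str(i+1))$. The reverse inclusion is immediate from Lemma~\ref{le:perp1}.

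The hard part will be the Ext calculations involving $k(\eta)$ in cases (3) and (4), since $k(\eta)$ is neither compact nor supported at $x$; the non-vanishing of $\Hom(k(\eta), \mcE(x))$ and the vanishing of $\Ext^1(k(\eta), \mcA(x))$ both rely crucially on the Matlis-dual pairing of $E(x)$ with $\hat R_x$ and on $t$-adic completeness. Once these are isolated, the remaining computations (disjoint supports, socles, finite-length towers) are routine.
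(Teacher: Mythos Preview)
Your argument is correct and supplies the details the paper omits entirely (its proof reads, in full, ``These are all (more or less) straightforward computations''). The organizing idea for (1), (3), (4), (5)---showing $E\otimes\str(i)\cong E$ so that ${}^\perp E$ is a tensor ideal and then reading off $\mcV$ from the residue fields---is exactly the right way to make those computations systematic, and your treatment of (2) via the localization triangle for $\Loc(\str(i+1))$ is clean.

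One imprecision worth fixing: in case (4) the phrase ``disjoint stalk supports'' is not literally true for $\mcA(x)$. Over an affine chart $\Spec B$ containing $x$ and a closed $y\neq x$, the sheaf $(i_x)_*\hat R_x$ is the $B$-module $\hat R_x$, and its stalk at $y$ is $\hat R_x\otimes_B B_{\mathfrak p_y}$; since the uniformizer $t$ at $x$ lies in $B\setminus\mathfrak p_y$, this localization is the fraction field of $\hat R_x$, which is nonzero. What you actually need is either that $k(y)$ is compact with $\supp k(y)=\{y\}$ disjoint from $\supp\mcA(x)=\{x,\eta\}$ (Lemma~\ref{lem:supp}), so $\RHom(k(y),\mcA(x))$ is a shift of $k(y)\otimes\mcA(x)=0$, or the direct computation: any generator of $\mathfrak p_y$ becomes a unit in $\hat R_x$, which kills both $\Hom$ and $\Ext^1$. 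With that adjustment everything goes through.
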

\begin{proof} These are all (more or less) straightforward
computations.
\end{proof}

Knowing this it is not hard to write down minimal sets of
pure-injectives determining the ideals.

\begin{cor} Let $\mcV$ be a subset of $\PP^1$. Then
we have \[\Gamma_\mcV\sfD(\QCoh \PP^1)
= {}^\perp\{k(x)\mid x\notin \mcV\}.\]
\end{cor}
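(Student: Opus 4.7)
The plan is to derive the corollary immediately from Lemma~\ref{lem:leftperps}, since essentially all the content is already present there. The first step is to observe that for every point $x \in \PP^1$ one has
\begin{displaymath}
{}^\perp k(x) = \Gamma_{\PP^1 \setminus \{x\}}\sfD(\QCoh \PP^1).
\end{displaymath}
For a closed point $x$ the sheaf $k(x)$ is coherent and supported at $\{x\}$, so this is case (1) of Lemma~\ref{lem:leftperps}; for the generic point $\eta$ the identity is exactly case (5) of that lemma. Thus both kinds of points are handled uniformly by the preceding computation.

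The second step is purely formal: the left perpendicular of a set of objects is by definition the intersection of the individual left perpendiculars, so combining the above with the case-by-case identification gives
\begin{displaymath}
{}^\perp\{k(x) \mid x \notin \mcV\} = \bigcap_{x \notin \mcV} {}^\perp k(x) = \bigcap_{x \notin \mcV} \Gamma_{\PP^1 \setminus \{x\}}\sfD(\QCoh \PP^1).
\end{displaymath}

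The third step is to unwind the definition of $\Gamma_\mcW \sfD(\QCoh \PP^1)$ recalled in Section~\ref{sec:smashing}: an object $X$ lies in $\Gamma_{\PP^1 \setminus \{x\}}\sfD(\QCoh \PP^1)$ if and only if $X \otimes k(x) \cong 0$. Consequently $X$ belongs to the displayed intersection if and only if $X \otimes k(x) \cong 0$ for every $x \notin \mcV$, which is precisely the condition defining membership in $\Gamma_\mcV \sfD(\QCoh \PP^1)$. Both inclusions therefore follow simultaneously.

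There is no real obstacle: the substantive computations are concentrated in Lemma~\ref{lem:leftperps}, and what remains is a bookkeeping argument. The only verification needed is that $k(x)$ fits the hypotheses of the appropriate case of that lemma for each $x$, which is immediate.
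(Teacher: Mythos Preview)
Your proof is correct and follows precisely the approach the paper intends: the corollary is stated immediately after Lemma~\ref{lem:leftperps} without proof, and your argument spells out exactly the intended deduction, using cases (1) and (5) of that lemma together with the formal observation that left perpendiculars of sets are intersections and that the $\Gamma_\mcW$ subcategories intersect in the obvious way.
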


We also now have enough information to confirm that we have a complete
list of the cohomological localizing subcategories.

\begin{thm}\label{thm:classification} 
There is a lattice isomorphism
\begin{displaymath} 
\{\text{cohomological localizing subcategories
of }\sfD(\QCoh \PP^1)\} \xrightarrow{\sim}  2^{\PP^1}\amalg\ZZ,
\end{displaymath} 
where $2^{\PP^1}$ is the powerset of $\PP^1$,
with inverse defined by
\begin{displaymath} \mcV \mapsto \Gamma_\mcV\sfD(\QCoh \PP^1) \quad
  \text{and} \quad i \mapsto \Loc(\str(i)).
\end{displaymath} That is, the cohomological localizing subcategories
are precisely the localizing ideals and the $\Loc(\str(i))$ for
$i\in\ZZ$.
\end{thm}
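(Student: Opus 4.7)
The plan is to combine Corollary \ref{cor:leftperp} with Lemma \ref{lem:leftperps} and the classification of thick subcategories of $\sfD^\mathrm{b}(\Coh\PP^1)$ recalled earlier, reducing the problem to an explicit bookkeeping on intersections of perpendicular categories. First I would verify that every subcategory on the claimed list is cohomological: for the localizing ideals, the corollary immediately preceding the theorem gives $\Gamma_\mcV\sfD(\QCoh\PP^1) = {}^\perp\{k(x)\mid x\notin\mcV\}$, and since the $k(x)$ are indecomposable pure-injective sheaves, Corollary \ref{cor:leftperp} applies; for $\Loc(\str(i))$, part (2) of Lemma \ref{lem:leftperps} identifies this directly with ${}^\perp\str(i-1)$.

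For the converse, suppose $\sfL$ is cohomological. By Corollary \ref{cor:leftperp} one can write $\sfL = \bigcap_{\lambda\in\Lambda} {}^\perp\{\Sigma^j Y_\lambda\}_{j\in\ZZ}$ for a family of indecomposable pure-injectives $Y_\lambda\in\QCoh\PP^1$, and each factor in this intersection is enumerated by Lemma \ref{lem:leftperps}: it is either some $\Gamma_{\mcU_\lambda}\sfD(\QCoh\PP^1)$ (if $Y_\lambda$ is coherent, Pr\"ufer, adic, or $k(\eta)$) or $\Loc(\str(i+1))$ (if $Y_\lambda = \str(i)$). I would then analyse the intersection according to how many distinct twisting sheaves appear among the $Y_\lambda$. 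If none do, the intersection of localizing ideals $\bigcap_\lambda \Gamma_{\mcU_\lambda} = \Gamma_{\bigcap_\lambda \mcU_\lambda}$ is again a localizing ideal, hence on the list. If exactly one twisting sheaf $\str(i)$ appears and nothing else, the intersection is precisely $\Loc(\str(i+1))$. If $\str(i)$ appears alongside any other pure-injective, the intersection lies in $\Loc(\str(i+1))\cap\Gamma_\mcU$ for some $\mcU\subsetneq\PP^1$; but by Lemma \ref{le:perp1} we have $\Loc(\str(i+1)) = \Add(\Sigma^j\str(i+1)\mid j\in\ZZ)$, and every nonzero coproduct of shifts of $\str(i+1)$ has full support $\PP^1$, so this intersection is $0 = \Gamma_\varnothing\sfD(\QCoh\PP^1)$. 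Finally, if two distinct $\str(i),\str(j)$ appear, then $\Loc(\str(i+1))\cap\Loc(\str(j+1))=0$ by uniqueness of direct sum decomposition into indecomposables with local endomorphism rings. In every case $\sfL$ is on the list.

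To complete the lattice isomorphism, meets are intersections and are covered by the same analysis; for joins, $\Gamma_\mcV\vee\Gamma_\mcW = \Gamma_{\mcV\cup\mcW}$ is immediate from $\Gamma_\mcV = \Loc(k(x)\mid x\in\mcV)$, and the remaining nontrivial joins must collapse to the top. Specifically, the thick subcategory generated in $\sfD^\mathrm{b}(\Coh\PP^1)$ by $\{\str(i),\str(j)\}$ (for $i\neq j$) or by $\{\str(i),k(x)\}$ (for any $x\in\mcV\neq\varnothing$) cannot be any of the proper thick subcategories in the earlier classification, since it contains $\str(i)$ (whose support is all of $\PP^1$) together with a distinct line bundle or a torsion object, so it must be all of $\sfD^\mathrm{b}(\Coh\PP^1)$; the corresponding join of localizing subcategories is therefore $\sfD(\QCoh\PP^1)$, matching the gluing $\amalg$ of the two lattices. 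The main obstacle is the mixed-case analysis in the converse direction: everything hinges on the explicit identification $\Loc(\str(i+1)) = \Add(\Sigma^j\str(i+1)\mid j\in\ZZ)$, the fact that each nonzero object there has full support, and the mutual incompatibility of two distinct $\Loc(\str(\cdot))$'s; once these are in place, the remainder of the proof is essentially bookkeeping.
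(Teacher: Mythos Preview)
Your proposal is correct and follows essentially the same route as the paper: reduce via Corollary~\ref{cor:leftperp} to intersections of left perpendiculars of indecomposable pure-injectives, then invoke Lemma~\ref{lem:leftperps} to identify each factor. The paper's proof is deliberately terse at this point and simply asserts that the intersection analysis yields only the listed subcategories; you have written out that case analysis (no twisting sheaves, one twisting sheaf alone, a twisting sheaf together with something else, two distinct twisting sheaves) and also verified the lattice structure explicitly, both of which the paper leaves to the reader.
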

\begin{proof} By Corollary~\ref{cor:leftperp} the cohomological
localizing subcategories are precisely the localizing subcategories
which are left perpendicular to a set of indecomposable
pure-injectives. Taking the left perpendicular of a set of
pure-injectives corresponds to intersecting the corresponding left
perpendiculars. By Lemma~\ref{lem:leftperps} we thus see that any such
localizing subcategory is of the form claimed.
\end{proof}

\begin{rem}
  Denote by $\Ind\PP^1$ the set of isomorphism classes of
  indecomposable pure-injective sheaves. The subsets of the form
  $\sfL^\perp\cap\Ind\PP^1$ for some cohomological localizing
  subcategory $\sfL$ are listed in Lemmas~\ref{le:perp1},
  \ref{le:perp2}, and \ref{le:perp3}.
\end{rem}

\section{Exotic localizations}

As noted in Section~\ref{sec:smashing} we have a classification both of ideals and of smashing localizations for $\sfD(\QCoh
\PP^1)$. Moreover, we have just shown in
Theorem~\ref{thm:classification} that together these are precisely the
cohomological localizations. It is obvious to ask if there are
non-cohomological localizations; we do not know the answer to this
question and don't hazard a guess.

In this section we at least provide some foundation for future work in
this direction by presenting some criteria to guarantee a localizing
subcategory is an ideal. This is relevant as any non-cohomological
localization could not be an ideal\textemdash{}we proved that all
ideals are cohomological. As we shall see this dramatically restricts
the possible form of a potential `exotic' localizing subcategory.

\subsection{A restriction on supports}

We begin by analysing support theoretic conditions that ensure a
localizing subcategory is an ideal. Since $\PP^1$ is $1$-dimensional
the consequences we obtain are quite strong. However, the ideas
present in the arguments should be of more general interest.

The first observation is that if the support of an object does not
contain some closed point then that object generates an ideal.

\begin{lem}\label{lem_closedideal} Let $y$ be a closed point of
$\PP^1$ and let $X\in \sfD(\QCoh \PP^1)$ be such that $y\notin \supp
X$. Then $\sfL = \Loc(X)$ is an ideal.
\end{lem}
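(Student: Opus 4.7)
My plan is to reduce to Neeman's classification of localizing subcategories for commutative noetherian rings by exploiting the fact that $U:=\PP^1\setminus\{y\}$ is affine (since $y$ is closed). The hypothesis $y\notin\supp X$ means $k(y)\otimes X=0$, so $X$ lies in the localizing tensor ideal $\Gamma_{\PP^1\setminus\{y\}}\sfD(\QCoh\PP^1)$, whence so does $\sfL=\Loc(X)$. The complementary ideal $\Gamma_{\{y\}}\sfD(\QCoh\PP^1)$ is smashing (being compactly generated by $k(y)$), and since the two ideals meet trivially one sees that its right perpendicular coincides with $\Gamma_{\PP^1\setminus\{y\}}\sfD(\QCoh\PP^1)$. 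The resulting recollement identifies the latter, via $j^*$ along $j\colon U\hookrightarrow\PP^1$ with quasi-inverse $j_*$, with $\sfD(\QCoh U)\simeq\sfD(\Modu R)$ where $U=\Spec R$ for a commutative noetherian ring $R$; under this equivalence $\sfL$ corresponds to $\Loc(j^*X)$.

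By Neeman's chromatic tower theorem, every localizing subcategory of $\sfD(\Modu R)$ is a tensor ideal, so $\Loc(j^*X)$ is closed under the derived tensor product with arbitrary objects. To transfer this back to $\sfD(\QCoh\PP^1)$, I would apply the projection formula for the open immersion $j$: for any $E\in\sfD(\QCoh\PP^1)$ and $Y\in\sfL$,
\[E\otimes Y\;\cong\;E\otimes j_*j^*Y\;\cong\;j_*\bigl(j^*E\otimes j^*Y\bigr),\]
using $Y\cong j_*j^*Y$. The inner factor $j^*E\otimes j^*Y$ lies in $\Loc(j^*X)$ by the previous sentence, so $E\otimes Y\in j_*\Loc(j^*X)=\sfL$, proving $\sfL$ is an ideal.

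The main obstacle is the first reduction: establishing the tensor-compatible equivalence $\Gamma_{\PP^1\setminus\{y\}}\sfD(\QCoh\PP^1)\simeq\sfD(\QCoh U)$ and verifying that $\sfL$ is genuinely controlled by the affine situation. This relies on the smashing property of $\Gamma_{\{y\}}\sfD(\QCoh\PP^1)$ together with the standard recollement for an open immersion into a noetherian scheme. Once these identifications are in place, the remainder is a formal application of Neeman's theorem combined with the projection formula.
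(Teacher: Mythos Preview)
Your proposal is correct and follows essentially the same route as the paper: both observe that $k(y)\otimes X=0$ forces $X$ (hence $\sfL$) into $\Loc(k(y))^\perp\cong\sfD(\QCoh\AA^1)$, invoke Neeman's result that all localizing subcategories of the derived category of a commutative noetherian ring are ideals, and then transport the ideal property back along the inclusion of this smashing right orthogonal. The only difference is cosmetic: the paper phrases the reduction via $\Loc(k(y))^\perp$ and leaves the final transfer implicit, whereas you phrase it via $\Gamma_{\PP^1\setminus\{y\}}$ and the open immersion $j$ and make the transfer explicit through the projection formula.
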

\begin{proof} By definition we have $k(y)\otimes X\cong 0$. Since $y$
is a closed point the torsion sheaf $k(y)$ is compact, and hence
rigid, so we deduce that
\begin{displaymath} \sHom(k(y), X)\cong 0.
\end{displaymath} 
In particular, $X\in \Loc(k(y))^\perp \cong
\sfD(\QCoh \AA^1)$, where we have made an identification of
$\PP^1\setminus\{y\}$ with the affine line. Since $k(y)$ is compact
the subcategory $\Loc(k(y))^\perp$ is localizing and so
\begin{displaymath} \sfL \subseteq \Loc(k(y))^\perp \cong \sfD(\QCoh
\AA^1).
\end{displaymath} It just remains to note that every localizing
subcategory of $\sfD(\QCoh \AA^1)$ is an ideal and that
$\Loc(k(y))^\perp$ is itself an ideal, from which it is immediate that
$\sfL$ is an ideal in $\sfD(\QCoh \PP^1)$.
\end{proof}

Let $\mathcal{V} = \PP^1\setminus \{\eta\}$ denote the set of closed
points of $\PP^1$. Corresponding to this Thomason subset there is a
smashing subcategory $\Gamma_\mathcal{V}\sfD(\QCoh \PP^1)$ which comes
with a natural action of $\sfD(\QCoh \PP^1)$, in the sense of \cite{St2013}, via the corresponding
acyclization functor. Moreover, $\Gamma_\mathcal{V}\sfD(\QCoh \PP^1)$
is a tensor triangulated category in its own right, with tensor unit
$\Gamma_\mcV\str$ (which is, however, not compact).

\begin{lem}\label{lem_Vgen} The category $\Gamma_\mathcal{V}\sfD(\QCoh
\PP^1)$ is generated by its tensor unit and hence every localizing
subcategory contained in it is an ideal in it, and thus a submodule
for the $\sfD(\QCoh \PP^1)$ action. In particular, every localizing
subcategory of $\sfD(\QCoh \PP^1)$ contained in
$\Gamma_\mathcal{V}\sfD(\QCoh \PP^1)$ is an ideal of $\sfD(\QCoh
\PP^1)$.
\end{lem}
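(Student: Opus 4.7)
The plan is to establish that $\Gamma_\mcV\sfD(\QCoh\PP^1)=\Loc(\Gamma_\mcV\str)$, after which the submodule statement follows by a standard unit-generation argument.

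The first step is to identify $\Gamma_\mcV\str$ explicitly. The inclusion of $\str$ into its field of rational functions gives a short exact sequence $0\to\str\to k(\eta)\to k(\eta)/\str\to 0$ in $\QCoh\PP^1$. Comparing the resulting triangle in $\sfD(\QCoh\PP^1)$ with the acyclization triangle
\[
\Gamma_\mcV\str\lto\str\lto k(\eta)\lto\Sigma\Gamma_\mcV\str,
\]
one gets $\Gamma_\mcV\str\cong(k(\eta)/\str)[-1]$. Now $\PP^1$ is covered by two copies of $\AA^1=\Spec k[t]$, each the spectrum of a Dedekind domain, so Matlis theory yields a decomposition of the quotient into Prüfer sheaves:
\[
k(\eta)/\str\;\cong\;\bigoplus_{x\text{ closed}}\mcE(x).
\]
Consequently $\Gamma_\mcV\str\cong\bigoplus_{x\in\mcV}\mcE(x)[-1]$, and each $\mcE(x)[-1]$ is a direct summand of $\Gamma_\mcV\str$.

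The second step uses the structure of Prüfer sheaves to pass from the summands to the residue fields. Each $\mcE(x)$ is a countable filtered colimit of its finite-length subsheaves, all of which are iterated extensions of $k(x)$; hence $\mcE(x)\in\Loc(k(x))$. Conversely, multiplication by a uniformiser $\pi_x\in\str_{\PP^1,x}$ defines a surjection $\mcE(x)\twoheadrightarrow\mcE(x)$ with kernel $k(x)$, so $k(x)\in\Loc(\mcE(x))$. Combining, $\Loc(\mcE(x))=\Loc(k(x))$, and since $\mcE(x)[-1]$ is a summand of $\Gamma_\mcV\str$,
\[
k(x)\;\in\;\Loc(\mcE(x))\;\subseteq\;\Loc(\Gamma_\mcV\str)\qquad\text{for every closed point }x.
\]
By the classification of localizing tensor ideals recalled in Section~\ref{sec:smashing}, $\Gamma_\mcV\sfD(\QCoh\PP^1)$ is generated, as a localizing subcategory of $\sfD(\QCoh\PP^1)$, by $\{k(x)\mid x\in\mcV\}$. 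Thus $\Gamma_\mcV\sfD(\QCoh\PP^1)\subseteq\Loc(\Gamma_\mcV\str)$, and the reverse inclusion is trivial since $\Gamma_\mcV\str\in\Gamma_\mcV\sfD(\QCoh\PP^1)$.

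Having established that the tensor unit of $\Gamma_\mcV\sfD(\QCoh\PP^1)$ generates it, let $\sfL$ be any localizing subcategory contained in $\Gamma_\mcV\sfD(\QCoh\PP^1)$ and fix $X\in\sfL$. The full subcategory $\{Y\in\Gamma_\mcV\sfD(\QCoh\PP^1)\mid Y\otimes X\in\sfL\}$ is localizing and contains the tensor unit $\Gamma_\mcV\str$, hence coincides with $\Gamma_\mcV\sfD(\QCoh\PP^1)$. For an arbitrary $Z\in\sfD(\QCoh\PP^1)$ one then computes $Z\otimes X\cong Z\otimes\Gamma_\mcV\str\otimes X\cong\Gamma_\mcV Z\otimes X\in\sfL$, showing that $\sfL$ is stable under the $\sfD(\QCoh\PP^1)$-action.

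The main obstacle is the decomposition $k(\eta)/\str\cong\bigoplus_x\mcE(x)$ in Step~1. Although it is essentially Matlis theory on each of the two affine charts of $\PP^1$, some care is needed to glue the local decompositions into a global one. Once this is in hand, the rest of the argument is formal, relying only on the hereditarity of $\QCoh\PP^1$, the structure of Prüfer sheaves, and the already-established classification of localizing ideals.
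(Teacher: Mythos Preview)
Your argument is correct, but it follows a genuinely different route from the paper's. The paper does not compute $\Gamma_\mcV\str$ explicitly at all: instead it invokes the general decomposition result of \cite{St2017}, valid because $\mcV$ is a discrete set (no specialization relations among closed points), to obtain
\[
\Gamma_\mcV\sfD(\QCoh\PP^1)\;\cong\;\prod_{x\in\mcV}\Gamma_x\sfD(\QCoh\PP^1),
\]
from which it is immediate that the unit $\Gamma_\mcV\str\cong\bigoplus_x\Gamma_x\str$ generates; the ideal/submodule consequences are then read off from \cite[Lemma~3.13]{St2013}. Your approach instead identifies $\Gamma_\mcV\str$ concretely as $\bigoplus_x\mcE(x)[-1]$ via the triangle $\Gamma_\mcV\str\to\str\to k(\eta)$ and the torsion decomposition of $k(\eta)/\str$, then shows $\Loc(\mcE(x))=\Loc(k(x))$ by elementary Pr\"ufer-sheaf manipulations. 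The paper's route is shorter and transparently generalises to any discrete collection of points in a tensor-triangulated setting; yours is more elementary and entirely self-contained, avoiding the external references. The two are compatible: your computation recovers, in this special case, exactly the summand description $\Gamma_x\str\cong\mcE(x)[-1]$ that the paper's abstract decomposition predicts. The gluing step you flag is not a real obstacle\textemdash a torsion quasi-coherent sheaf on a one-dimensional regular scheme decomposes as the direct sum of its local pieces, which here are precisely the $\mcE(x)$.
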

\begin{proof} The subset $\mathcal{V}$ is discrete, in the sense that
there are no specialization relations between any distinct pair of
points in it. It follows from \cite{St2017} that
$\Gamma_\mathcal{V}\sfD(\QCoh \PP^1)$ decomposes as
\begin{displaymath} \Gamma_\mathcal{V}\sfD(\QCoh \PP^1) \cong
\prod_{x\in \mathcal{V}} \Gamma_x\sfD(\QCoh \PP^1).
\end{displaymath} With respect to this decomposition the monoidal unit
$\Gamma_\mathcal{V}\str$ is just $\bigoplus_{x\in
\mathcal{V}}\Gamma_x\str$, which clearly generates. It follows that
every localizing subcategory of $\Gamma_\mathcal{V}\sfD(\QCoh \PP^1)$
is an ideal (see for instance \cite[Lemma~3.13]{St2013}) and from this the remaining statements are clear.
\end{proof}

As a particular consequence we get the following statement, which is
more in the spirit of Lemma~\ref{lem_closedideal}.

\begin{lem}\label{lem_etaideal} 
Let $X\in \sfD(\QCoh \PP^1)$ be an
object such that $\eta \notin \supp X$. Then $\Loc(X)$ is an ideal.
\end{lem}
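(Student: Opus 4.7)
The plan is to reduce to Lemma~\ref{lem_Vgen} by showing that $\Loc(X)$ is contained in $\Gamma_\mcV\sfD(\QCoh\PP^1)$, where $\mcV = \PP^1\setminus\{\eta\}$ is the set of closed points.

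First I would unpack the hypothesis. By definition of support, $\eta\notin\supp X$ means $k(\eta)\otimes X \cong 0$. The classification of localizing tensor ideals recalled in Section~\ref{sec:smashing} describes $\Gamma_\mcV\sfD(\QCoh\PP^1)$ as the full subcategory of objects $Y$ with $Y\otimes k(y)\cong 0$ for every $y\notin \mcV$; here the only such point is $\eta$, so the condition $k(\eta)\otimes X\cong 0$ says precisely $X\in \Gamma_\mcV\sfD(\QCoh\PP^1)$.

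Next I would use that $\Gamma_\mcV\sfD(\QCoh\PP^1)$ is itself a localizing subcategory of $\sfD(\QCoh\PP^1)$ to conclude $\Loc(X)\subseteq \Gamma_\mcV\sfD(\QCoh\PP^1)$. At this point Lemma~\ref{lem_Vgen} applies directly: since $\mcV$ is the discrete set of closed points, every localizing subcategory of $\sfD(\QCoh\PP^1)$ contained in $\Gamma_\mcV\sfD(\QCoh\PP^1)$ is an ideal of $\sfD(\QCoh\PP^1)$. Therefore $\Loc(X)$ is an ideal.

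There is no real obstacle; the argument is essentially formal once one has Lemma~\ref{lem_Vgen} in hand, mirroring the strategy of Lemma~\ref{lem_closedideal} with the roles of $y$ and $\eta$ interchanged and with $\Gamma_\mcV\sfD(\QCoh\PP^1)$ replacing $\Loc(k(y))^\perp$. The only point that deserves a sentence of explanation is the identification of the condition $\eta\notin\supp X$ with membership in $\Gamma_\mcV\sfD(\QCoh\PP^1)$, which uses the classification of localizing ideals to ensure that the single vanishing $k(\eta)\otimes X\cong 0$ suffices (the other residue fields $k(y)$ for $y\in\mcV$ impose no constraint here).
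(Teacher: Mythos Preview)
Your proposal is correct and follows essentially the same approach as the paper: show $X\in\Gamma_\mcV\sfD(\QCoh\PP^1)$ from $\eta\notin\supp X$, deduce $\Loc(X)\subseteq\Gamma_\mcV\sfD(\QCoh\PP^1)$, and invoke Lemma~\ref{lem_Vgen}. The paper's version is simply more terse, omitting the explicit unpacking of the support condition that you spell out.
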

\begin{proof} Since $\eta\notin \supp X$ we have
  $X\in \Gamma_\mathcal{V}\sfD(\QCoh \PP^1)$. Thus $\Loc(X)$ is
  contained in $\Gamma_\mathcal{V}\sfD(\QCoh \PP^1)$ and therefore an
  ideal by the previous lemma.
\end{proof}

We have shown that for any object $X$ with proper support the category
$\Loc(X)$ is an ideal. Next we will show that any localizing
subcategory containing such an object is automatically an ideal. This
requires the following technical lemma.

\begin{lem}\label{lem_quotientmonogenic} If $\sfL$ is a non-zero
localizing ideal of $\sfD(\QCoh \PP^1)$ then the quotient $\sfT =
\sfD(\QCoh \PP^1)/\sfL$ is generated by the tensor unit.
\end{lem}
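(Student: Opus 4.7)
The plan is to reduce the claim to the identity $\Loc(\str,\sfL)=\sfD(\QCoh\PP^1)$. Under the correspondence between localizing subcategories of $\sfT$ and those of $\sfD(\QCoh\PP^1)$ containing $\sfL$, the subcategory of $\sfT$ generated by the image of $\str$ corresponds to $\Loc(\str,\sfL)$. Since any two distinct line bundles generate $\sfD^\mathrm{b}(\Coh\PP^1)$ as a thick subcategory (Section~\ref{sec:smashing}), and this in turn generates $\sfD(\QCoh\PP^1)$ as a localizing subcategory, it suffices to exhibit some $n\neq 0$ with $\str(n)\in\Loc(\str,\sfL)$.

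By the classification of localizing tensor ideals, $\sfL=\Gamma_{\mcV}\sfD(\QCoh\PP^1)$ for some non-empty $\mcV\subseteq\PP^1$. Suppose first that $\mcV$ contains a closed point $x$ of degree $d$, so that $k(x)\in\sfL$. The short exact sequence
\[
0 \lto \str(-d) \lto \str \lto k(x) \lto 0
\]
arising from the ideal sheaf of $x$ supplies a triangle showing $\str(-d)\in\Thick(\str,k(x))\subseteq\Loc(\str,\sfL)$, and we are done.

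It remains to handle the case $\mcV=\{\eta\}$, where $\sfL=\Loc(k(\eta))$ contains no object with closed-point support. The strategy is to produce such an object and reduce to the previous case. Pick any closed point $x\in\PP^1$. The localization triangle for the smashing subcategory associated to the set $\mcW$ of all closed points takes the form
\[
\Gamma_{\mcW}\str \lto \str \lto k(\eta) \lto \Sigma\,\Gamma_{\mcW}\str,
\]
so $\Gamma_{\mcW}\str\in\Loc(\str,k(\eta))\subseteq\Loc(\str,\sfL)$. Using the decomposition $\Gamma_{\mcW}\str\cong\bigoplus_{y\in\mcW}\Gamma_{y}\str$ of Lemma~\ref{lem_Vgen}, the summand $\Gamma_{x}\str\simeq\mcE(x)[-1]$ lies in $\Loc(\str,\sfL)$. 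Multiplication by a uniformiser of $\str_{\PP^1,x}$ acts surjectively on $\mcE(x)$ with kernel $k(x)$, yielding a triangle $\mcE(x)\to\mcE(x)\to k(x)[1]$ that places $k(x)\in\Loc(\mcE(x))\subseteq\Loc(\str,\sfL)$; we then invoke the previous case.

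The principal obstacle is the $\mcV=\{\eta\}$ case: one must fabricate a closed-point residue field out of the purely generic generator of $\sfL$. This is the one step in the argument where explicit local information about $\PP^1$---specifically the identification of $\Gamma_{x}\str$ with $\mcE(x)[-1]$ and the uniformiser triangle---is used essentially.
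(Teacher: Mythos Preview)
Your argument is correct. The reduction to showing $\Loc(\str,\sfL)=\sfD(\QCoh\PP^1)$ is sound (using that $\sfL$ is strictly localizing so that the usual bijection between localizing subcategories of the quotient and those containing $\sfL$ applies), and both cases go through as written; in particular the identification $\Gamma_x\str\simeq\mcE(x)[-1]$ and the uniformiser triangle are valid since $\mcE(x)$ is a skyscraper at $x$.

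Your route, however, differs from the paper's. The paper never produces a second line bundle inside $\Loc(\str,\sfL)$; instead it observes that being generated by the tensor unit is inherited by monoidal quotients, so one may assume $\supp\sfL$ is a single point. For a closed point the quotient is identified with $\sfD(\QCoh\AA^1)$, which is visibly monogenic; for $\supp\sfL=\{\eta\}$ the recollement shows the quotient is equivalent to $\Gamma_{\mcV}\sfD(\QCoh\PP^1)$, and Lemma~\ref{lem_Vgen} already records that this is generated by its unit. So where you stay in the ambient category and manufacture $k(x)$ from $k(\eta)$ via local cohomology and the uniformiser, the paper simply names the quotient and reads off the result. Your approach is more explicit and avoids the recollement identification, at the cost of the extra computation in the $\{\eta\}$ case; the paper's is shorter and more structural, leaning on the description of the quotient rather than on generators. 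Both end up invoking Lemma~\ref{lem_Vgen}, but for different purposes: you use only the direct-sum decomposition of $\Gamma_{\mcW}\str$, whereas the paper uses the full conclusion that $\Gamma_{\mcV}\sfD(\QCoh\PP^1)$ is unit-generated.
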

\begin{proof} Since the property of being generated by the tensor unit
is preserved under taking quotients it is enough to verify the
statement when $\sfL$ has support a single point. If $\supp \sfL$ is a
closed point then we can identify $\sfT$ with the derived category of
the open complement, which is isomorphic to $\AA^1$. Having made this
observation the conclusion follows immediately.

It remains to verify the lemma in the case that $\supp \sfL =
\{\eta\}$. In this situation there is a recollement
\begin{displaymath} \xymatrix{ \Gamma_\mathcal{V}\sfD(\QCoh \PP^1)
\ar[r]<1ex> \ar@{<-}[r] \ar[r]<-1ex> & \sfD(\QCoh \PP^1) \ar[r]<1ex>
\ar@{<-}[r] \ar[r]<-1ex> & \sfL }
\end{displaymath} where, as above, $\mathcal{V}$ denotes the set of
closed points of $\PP^1$. The bottom four arrows identify
$\Gamma_\mathcal{V}\sfD(\QCoh \PP^1)$ with the quotient $\sfT$ and the
desired conclusion is given by Lemma~\ref{lem_Vgen}.
\end{proof}

Combining all of this we arrive at the following proposition.

\begin{prop}\label{prop_support} If $\sfL$ is a localizing subcategory
of $\sfD(\QCoh \PP^1)$ such that there is a non-zero $X\in \sfL$ with
$\supp X \subsetneq \PP^1$ then $\sfL$ is an ideal.
\end{prop}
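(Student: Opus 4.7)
The plan is to find a non-zero localizing tensor ideal $\sfI$ sitting inside $\sfL$ and then argue that, modulo $\sfI$, the containment $\sfL\supseteq\sfI$ together with unit-generation of $\sfT/\sfI$ forces $\sfL$ to be closed under the tensor action.

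First, I would produce the ideal $\sfI$. Since $\supp X$ is a non-empty proper subset of $\PP^1$, either $\eta\notin\supp X$, or else $\eta\in\supp X$ but some closed point $y$ is missing from $\supp X$. In the former case Lemma~\ref{lem_etaideal} applies, and in the latter Lemma~\ref{lem_closedideal} applies. Either way $\sfI:=\Loc(X)$ is a non-zero localizing tensor ideal of $\sfD(\QCoh\PP^1)$, and by construction $\sfI\subseteq\sfL$.

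Second, I would descend to the Verdier quotient $q\colon\sfT\to\sfT/\sfI$, where $\sfT:=\sfD(\QCoh\PP^1)$. Because $\sfI$ is a tensor ideal, the monoidal structure on $\sfT$ descends to a tensor triangulated structure on $\sfT/\sfI$ for which $q$ is monoidal and coproduct-preserving. By Lemma~\ref{lem_quotientmonogenic}, $\sfT/\sfI$ is generated as a localizing subcategory by its tensor unit $q(\str)$. Now the standard argument (compare Lemma~\ref{lem_Vgen}) shows that every localizing subcategory $\sfM\subseteq\sfT/\sfI$ is automatically a tensor ideal: the full subcategory $\{Z\in\sfT/\sfI\mid Z\otimes\sfM\subseteq\sfM\}$ is triangulated and closed under coproducts, hence localizing in $\sfT/\sfI$, and contains the tensor unit, so coincides with $\sfT/\sfI$.

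Third, I would transfer this information back to $\sfT$. Since $\sfI$ is strictly localizing, the assignment $\sfL\mapsto q(\sfL)$ is a bijection between localizing subcategories of $\sfT$ containing $\sfI$ and localizing subcategories of $\sfT/\sfI$, with inverse given by preimage. By the previous step $q(\sfL)$ is a tensor ideal in $\sfT/\sfI$, so for any $Y\in\sfT$ and $L\in\sfL$ we have $q(Y\otimes L)=q(Y)\otimes q(L)\in q(\sfL)$, and therefore $Y\otimes L\in q^{-1}(q(\sfL))=\sfL$. This is exactly what is needed. The main conceptual content is in the second step, where unit-generation of $\sfT/\sfI$ is exploited to upgrade arbitrary localizing subcategories to tensor ideals; the first and third steps are essentially formal, relying on the supporting lemmas already in place.
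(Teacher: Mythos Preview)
Your proof is correct and follows essentially the same approach as the paper's own argument: produce the ideal $\sfI=\Loc(X)$ via Lemmas~\ref{lem_closedideal} or \ref{lem_etaideal}, pass to the monoidal quotient which is unit-generated by Lemma~\ref{lem_quotientmonogenic}, deduce that the image of $\sfL$ is an ideal there, and pull back. Your write-up is in fact more explicit than the paper's, in that you spell out the standard argument for why unit-generation forces every localizing subcategory to be an ideal and you articulate the bijection between localizing subcategories upstairs containing $\sfI$ and those downstairs.
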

\begin{proof} Let $X\in \sfL$ as in the statement of the
proposition. The object $X$ generates a non-zero localizing
subcategory $\Loc(X)\subseteq \sfL$. Since the support of $X$ is
proper and non-empty it fails to contain some point of $\PP^1$ and so,
by one of Lemma~\ref{lem_closedideal} or \ref{lem_etaideal}, it is an
ideal. We thus have a monoidal quotient functor $\sfD(\QCoh \PP^1)
\xrightarrow{\pi} \sfD(\QCoh \PP^1)/\Loc(X)$ and an induced
localizing subcategory $\sfL/\Loc(X)$ in the quotient. By
Lemma~\ref{lem_quotientmonogenic} the quotient $\sfD(\QCoh
\PP^1)/\Loc(X)$ is generated by the tensor unit and so $\sfL/\Loc(X)$
is a tensor ideal in it. But then $\sfL = \pi^{-1}(\sfL/\Loc(X))$ is
also an ideal, which completes the proof.
\end{proof}

\begin{ex} The non-ideals we know, namely the $\Loc(\str(i))$, are of
course compatible with the proposition: every object of
$\Loc(\str(i))$ is just a sum of suspensions of copies of $\str(i)$,
and these are all supported everywhere.
\end{ex}

The following interpretation is the most striking in our context.

\begin{cor}\label{cor_support} If $\sfL$ is a localizing subcategory
which is not cohomological then every non-zero object of $\sfL$ is
supported everywhere.
\end{cor}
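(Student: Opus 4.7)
The plan is to derive this as a direct contrapositive consequence of Proposition~\ref{prop_support} combined with the classification in Theorem~\ref{thm:classification}. The key observation is that being ``not cohomological'' is a stronger condition than being ``not a tensor ideal'' in this setting, which is exactly the hypothesis Proposition~\ref{prop_support} lets us turn around.

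First I would note that by Theorem~\ref{thm:classification} every localizing tensor ideal of $\sfD(\QCoh\PP^1)$ is cohomological: the complete list of cohomological subcategories consists of the $\Gamma_\mcV\sfD(\QCoh\PP^1)$ (which are the localizing ideals) together with the $\Loc(\str(i))$. Thus, if $\sfL$ is not cohomological, then in particular $\sfL$ is not a localizing tensor ideal.

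Now I would invoke the contrapositive of Proposition~\ref{prop_support}. That proposition states that whenever a localizing subcategory contains a non-zero object $X$ with $\supp X\subsetneq \PP^1$, the subcategory is an ideal. Contrapositively, if $\sfL$ is not an ideal, then no non-zero object of $\sfL$ can have proper support, i.e.\ every non-zero $X\in\sfL$ satisfies $\supp X = \PP^1$. Applying this to our non-cohomological $\sfL$ gives the claim.

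There is no real obstacle here since all the work has been done in the preceding results; the corollary is purely an observation that combining the classification of cohomological localizing subcategories with the support criterion of Proposition~\ref{prop_support} leaves no room for an exotic (non-cohomological) $\sfL$ to contain an object of proper support. If anything, the only thing to be careful about is to note explicitly that Theorem~\ref{thm:classification} implies the localizing ideals form a \emph{sub}collection of the cohomological localizing subcategories, which is the direction of implication needed.
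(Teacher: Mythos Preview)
Your argument is correct and is exactly the intended one: the paper leaves the proof implicit, but the corollary is meant to be read as the contrapositive of Proposition~\ref{prop_support} together with the observation from Theorem~\ref{thm:classification} that every localizing ideal is cohomological. There is nothing to add.
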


\subsection{Twisting sheaves and avoiding compacts}

We next make a few comments concerning the interactions between
localizing subcategories and the twisting sheaves.

\begin{lem} If $\sfL$ is a localizing subcategory which is not an
ideal then
\begin{displaymath} \sfL \cap \sfL(i) = 0
\end{displaymath} for all $i\in \ZZ\setminus\{0\}$.
\end{lem}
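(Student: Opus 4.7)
The plan is to argue by contradiction. Suppose $\sfL \cap \sfL(i)$ contains a non-zero object $X$ for some $i \in \ZZ \setminus \{0\}$. From $X \in \sfL(i) = \sfL \otimes \str(i)$ we may write $X = Y \otimes \str(i)$ with $Y \in \sfL$; hence both $X$ and $X(-i) = Y$ lie in $\sfL$. The aim is to use the second membership to promote $\sfL$ to a tensor ideal, contradicting the hypothesis.

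The crucial input comes from the lattice of thick subcategories of $\sfD^\mathrm{b}(\Coh \PP^1)$ recalled earlier. The thick subcategories generated by vector bundles form a Hasse diagram whose atoms are the $\Thick(\str(j))$ for $j \in \ZZ$ and whose top is $\sfD^\mathrm{b}(\Coh \PP^1)$, with the only element strictly above any atom being that top. Since $\Thick(\str)$ and $\Thick(\str(-i))$ are distinct atoms for $i \neq 0$, their join satisfies
\[\Thick(\str, \str(-i)) = \sfD^\mathrm{b}(\Coh \PP^1).\]
Hence every compact object $c$ can be built from $\str$ and $\str(-i)$ by iterated shifts, cones, and direct summands. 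Applying the triangulated functor $X \otimes -$ produces $X \otimes c$ from $X \otimes \str = X$ and $X \otimes \str(-i) = X(-i)$ by the same operations, and since $\sfL$ is thick we obtain $X \otimes c \in \sfL$ for every compact $c$. Coproduct closure then yields $\langle X \rangle \subseteq \sfL$, where $\langle X \rangle$ denotes the localizing tensor ideal generated by $X$.

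To finish, invoke Proposition~\ref{prop_support}: since $\sfL$ is not an ideal and $X \in \sfL$ is non-zero, $\supp X = \PP^1$. The classification of localizing tensor ideals recalled in Section~\ref{sec:smashing} then identifies $\langle X \rangle$ with $\Gamma_{\supp X}\sfD(\QCoh \PP^1) = \Gamma_{\PP^1}\sfD(\QCoh \PP^1) = \sfD(\QCoh \PP^1)$. Therefore $\sfL = \sfD(\QCoh \PP^1)$, which is trivially an ideal, contradicting the hypothesis, so $\sfL \cap \sfL(i) = 0$.

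The only step with genuine content is the identification $\Thick(\str, \str(-i)) = \sfD^\mathrm{b}(\Coh \PP^1)$, which rests on the recalled lattice structure of thick subcategories generated by vector bundles; the transfer through $X \otimes -$ and the final appeal to Proposition~\ref{prop_support} together with the classification of ideals are essentially formal once this is in hand.
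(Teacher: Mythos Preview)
Your argument is correct and shares the same opening move as the paper: from $X\in\sfL\cap\sfL(i)$ one extracts $X,X(-i)\in\sfL$, and the key input is that $\str$ and $\str(-i)$ together generate all compacts. The difference lies in how the contradiction is reached. The paper is more direct: it writes down one explicit triangle $\str(-i)\to\str\to Z(y)\to\Sigma\str(-i)$ with $Z(y)$ a length-$i$ torsion sheaf at a closed point $y$, tensors with $X$ to get $X\otimes Z(y)\in\sfL$, observes this is non-zero (since $\supp X=\PP^1$ by Proposition~\ref{prop_support}) but has support $\{y\}$, and then invokes Proposition~\ref{prop_support} once more to force $\sfL$ to be an ideal. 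You instead pass through the whole lattice: from $X\otimes c\in\sfL$ for every compact $c$ you deduce $\langle X\rangle\subseteq\sfL$, then use $\supp X=\PP^1$ together with the classification of localizing ideals to conclude $\langle X\rangle=\sfD(\QCoh\PP^1)$, hence $\sfL$ is everything. Your route yields the sharper conclusion that $\sfL$ must be the whole category, at the cost of appealing to the ideal classification; the paper's single-triangle argument is more self-contained and avoids that machinery. One small remark: your phrase ``coproduct closure'' is doing real work---the clean justification is that $\{E:X\otimes E\in\sfL\}$ is a localizing subcategory containing all compacts, hence equals $\sfD(\QCoh\PP^1)$.
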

\begin{proof} Without loss of generality we may assume $i>0$. Suppose,
for a contradiction, that $X\in \sfL\cap \sfL(i)$ is non-zero. Pick a
closed point $y$ and consider a triangle
\begin{displaymath} \str(-i) \lto \str \lto Z(y) \lto \Sigma \str(-i)
\end{displaymath} where $Z(y)$ is the cyclic torsion sheaf of length
$i$ supported at $y$. We can tensor with $X$ to get a new triangle
\begin{displaymath} X(-i) \lto X \lto X\otimes Z(y) \lto \Sigma X(-i),
\end{displaymath} where both $X$ and $X(-i)$ lie in $\sfL$ by
hypothesis. Thus, since $\sfL$ is localizing, we see that $X\otimes
Z(y)$ lies in $\sfL$. By Proposition~\ref{prop_support} we know that
$X$ is supported everywhere and so $X\otimes Z(y)\neq 0$. But on the
other hand, $X\otimes Z(y)$ is supported only at $y$ which, by the
same Proposition, implies that $\sfL$ is an ideal yielding a
contradiction.
\end{proof}

\begin{rem} The lemma implies that non-cohomological localizing
subcategories would have to come in $\ZZ$-indexed families.
\end{rem}

\begin{lem} If $\sfL$ is a localizing subcategory such that
\begin{displaymath} \Loc(\str(i)) \subsetneq \sfL
\end{displaymath} for some $i\in \ZZ$ then $\sfL = \sfD(\QCoh \PP^1)$.
\end{lem}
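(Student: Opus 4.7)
The plan is to exploit the semiorthogonal decomposition recorded in Lemma~\ref{le:perp1}, which gives $\Loc(\str(i))^\perp = \Loc(\str(i-1)) = \Add(\Sigma^j\str(i-1)\mid j\in\ZZ)$. Combined with Beilinson's observation that $\str(i)$ and $\str(i-1)$ jointly generate $\sfD(\QCoh\PP^1)$ as a localizing subcategory (the Beilinson tilting object used to set up the equivalence with $\sfD(\Modu A)$), this reduces the problem to showing that $\sfL$ must contain $\str(i-1)$.

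Pick some $X\in\sfL\setminus\Loc(\str(i))$, which exists by the strict containment. The localization sequence attached to $\Loc(\str(i))$ yields a triangle
\[
X'\lto X\lto X''\lto \Sigma X'
\]
with $X'\in\Loc(\str(i))$ and $X''\in\Loc(\str(i))^\perp=\Loc(\str(i-1))$; moreover $X''\neq 0$ precisely because $X\notin\Loc(\str(i))$. By hypothesis $\Loc(\str(i))\subseteq\sfL$, so $X'\in\sfL$, and since $\sfL$ is thick we also have $X''\in\sfL$.

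The main (and really only) observation is then that every non-zero object of $\Loc(\str(i-1))$ has some $\Sigma^j\str(i-1)$ as a direct summand. This is immediate from the description $\Loc(\str(i-1))=\Add(\Sigma^j\str(i-1)\mid j\in\ZZ)$ together with the fact that $\End(\str(i-1))\cong k$ is a field; equivalently, $\Loc(\str(i-1))$ is equivalent to $\sfD(\Modu k)$, in which every non-zero object is a coproduct of shifts of $k$ and therefore splits off such a shift. Applying this to $X''$ and using closure of $\sfL$ under summands and suspensions gives $\str(i-1)\in\sfL$.

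At this point $\sfL$ contains both $\str(i)$ and $\str(i-1)$, hence $\Loc(\str(i),\str(i-1))\subseteq\sfL$. Beilinson's tilting theorem says that $\str(i-1)\oplus\str(i)$ (being a twist of $\str\oplus\str(1)$) is a compact generator of $\sfD(\QCoh\PP^1)$, so $\Loc(\str(i),\str(i-1))=\sfD(\QCoh\PP^1)$, and we conclude $\sfL=\sfD(\QCoh\PP^1)$. There is no real obstacle; the whole argument is essentially a reading of Lemma~\ref{le:perp1} through the lens of the Beilinson decomposition.
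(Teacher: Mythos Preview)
Your proof is correct and follows essentially the same approach as the paper: both rest on the identification $\sfD(\QCoh\PP^1)/\Loc(\str(i))\cong\Loc(\str(i))^\perp\cong\sfD(k)$ coming from Lemma~\ref{le:perp1}, together with the fact that $\sfD(k)$ has no proper non-trivial localizing subcategories. The paper states this via the abstract bijection between localizing subcategories containing $\Loc(\str(i))$ and localizing subcategories of the quotient, whereas you unpack the same content concretely through the localization triangle and Beilinson generation; the underlying argument is identical.
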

\begin{proof} Localizing subcategories containing $\Loc(\str(i))$ are
in bijection with localizing subcategories of $\sfD(\QCoh
\PP^1)/\Loc(\str(i))$. This quotient is just $\sfD(k)$ and so, since
we have asked for a proper containment, the result follows.
\end{proof}

We can now conclude that any non-cohomological localizing subcategory
must intersect the compact objects trivially.

\begin{prop} If $\sfL$ is a localizing subcategory which is not
cohomological then $\sfL$ contains no non-zero compact object.
\end{prop}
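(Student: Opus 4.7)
The plan is to argue by contradiction: suppose $\sfL$ is non-cohomological and contains a non-zero compact object $C$, and use the structure of the compact objects together with the two preceding lemmas to force $\sfL$ to be cohomological after all.

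First I would invoke Corollary~\ref{cor_support}: since $\sfL$ is not cohomological, the non-zero compact $C$ must have full support $\supp C = \PP^1$. Now every compact object of $\sfD(\QCoh\PP^1)$ lies in $\sfD^\mathrm{b}(\Coh\PP^1)$, and because $\Coh\PP^1$ is hereditary and Krull--Schmidt, $C$ decomposes as a finite direct sum of shifts of indecomposable coherent sheaves. The indecomposable coherent sheaves are either line bundles $\str(i)$ or indecomposable torsion sheaves, and the latter are supported at a single closed point. Since $\supp C = \PP^1$, at least one summand of $C$ must be a shift $\Sigma^n\str(i)$ of a line bundle.

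Next I would exploit that $\sfL$, being localizing, is thick and suspension-stable: the summand $\Sigma^n\str(i)$ therefore lies in $\sfL$, and hence so does $\str(i)$. This gives the inclusion
\begin{displaymath}
  \Loc(\str(i)) \subseteq \sfL.
\end{displaymath}
Now I apply the lemma just proved: if the inclusion is strict then $\sfL = \sfD(\QCoh\PP^1)$, and if it is an equality then $\sfL = \Loc(\str(i))$. In the first case $\sfL = \Gamma_{\PP^1}\sfD(\QCoh\PP^1)$ is a localizing ideal, and in the second case $\sfL$ appears on the list of Theorem~\ref{thm:classification}; in either case $\sfL$ is cohomological, contradicting our standing assumption.

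The argument is almost entirely bookkeeping on top of the previous two lemmas; there is no real obstacle, provided one is careful to observe that the hereditariness of $\Coh\PP^1$ delivers a direct sum decomposition of $C$ into shifts of indecomposables, so that the full-support hypothesis forces a line-bundle summand rather than merely producing a complex whose total support is $\PP^1$ without containing such a summand.
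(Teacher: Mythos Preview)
Your argument is correct and follows essentially the same route as the paper: both proofs reduce to a case analysis on indecomposable compacts, using the support restriction (Corollary~\ref{cor_support}, or the paper's citation of Lemma~\ref{lem_closedideal}) to exclude torsion sheaves and the preceding lemma to exclude line bundles. The only organizational difference is that the paper treats the two types of indecomposable summands separately from the outset, whereas you first invoke full support and then deduce the existence of a line-bundle summand; this is harmless extra bookkeeping rather than a different idea.
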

\begin{proof} The indecomposable compact objects are just the
indecomposable torsion sheaves at each point and the twisting
sheaves. By Lemma~\ref{lem_closedideal} we know $\sfL$ cannot contain
a torsion sheaf and by the last lemma it cannot contain a twisting
sheaf.
\end{proof}

\end{document}